\newcommand*\mycirc[1]{%
\begin{tikzpicture}[baseline=(C.base)]
\node[draw,circle,inner sep=1pt,minimum size=3ex](C) {#1};
\end{tikzpicture}}
\newtheorem{theorem}{Theorem}[section]
\newtheorem{proposition}{Proposition}[section]
\theoremstyle{definition}
\newtheorem{definition}{Definition}[section]
\newtheorem{example}{Example}[section]
\newtheorem{remark}{Remark}[section]
\numberwithin{equation}{section}
\begin{document}
\setcounter{page}{1}

\vspace*{1.0cm}
\title[Construction of Spaces with an Indefinite two-Metric]
{Construction of Spaces with an Indefinite two-Metric and Applications}
\author[F. Osmin, F. Kandy, J. Cure]{ Osmin Ferrer Villar$^{1,*}$, Kandy Ferrer Sotelo$^2$, Jaffeth Cure Arenas$^1$}
\maketitle
\vspace*{-0.6cm}

\begin{center}
{\footnotesize {\it

$^1$Department of Mathematics, University of Sucre.
Red door, Sincelejo, Sucre, Colombia.\\
$^2$Center for Basic Sciences, School of Engineering and Architecture, Pontifical Bolivarian University, Monteria, Colombia.

}}\end{center}

\vskip 4mm {\small\noindent {\bf Abstract.}

In this work, we introduce the notion of a two-Krein space and show that, starting from any classical Krein space, it is possible to construct spaces endowed with an indefinite two-inner product (admitting both positive and negative values) (Proposition \ref{k2k}). We develop the theory of two-Krein spaces (Proposition \ref{two-krein}), extending the classical structure and providing new tools for analysis in spaces with an indefinite two-inner product.

It is established that the fundamental decomposition of a Krein space transfers orthogonality to the space with an indefinite two-metric (Proposition \ref{presesrvaort}). Moreover, the properties of the fundamental symmetry of the classical Krein space are carried over to the standardized two-Krein space (Proposition \ref{propiedades j}). It is shown that the sets of positive and negative vectors generate a space with a semi-definite positive and complete two-inner product (Proposition \ref{completez}). One of the most important results in the theory of spaces with an indefinite metric is the equivalence of norms (Theorem \ref{equiv}); in this work, we extend this result to standardized spaces with an indefinite two-metric (Theorem \ref{2equiv}).

Additionally, the notion of function strongly of bounded variation in two-Krein spaces is introduced (Definition \ref{tvar}) and some of their properties are established (Theorem \ref{desigualdades}). It is also shown that the classical definition of bounded variation in two-Hilbert spaces \cite{FCF} is a particular case of the one presented in this work (Remark \ref{2h2k}). Furthermore, we present a technique to construct functions of bounded $t$-variation in standardized two-Krein spaces from functions of bounded variation in Krein spaces (Proposition \ref{va implica 2kva}), and we guarantee that when the $t$-variation of a function is zero, the two-norm evaluated at the images of the function remains constant with respect to $t$. Finally, we show that the class of strongly bounded $t$-variation functions in a standardized two-Krein space can be endowed with the structure of a two-norm (Theorem \ref{dota norma}).

\noindent {\bf Keywords.}
Indefinite two-metric; two-Krein space;
$t$-variation; negative $t$-variation; standardized space. }

\renewcommand{\thefootnote}{}
\footnotetext{ $^*$Corresponding author.
\par
E-mail addresses: osmin.ferrer@unisucre.edu.co (F. Osmin), kandy.ferrer@upb.edu.co (F. Kandy), jaffeth.cure@unisucrevirtual.edu.co (C. Jaffeth).
}

\section{Introduction}

The notion of the variation of a function over an interval $[a,b]$ originated in the work of the French mathematician Camille Jordan \cite{Jordan}. Since its introduction, the concept of bounded variation has been the subject of multiple generalizations in various directions. Among these, notable extensions include those to functions valued in vector spaces and functions taking values in normed spaces \cite{Mendoza, Chistyakov Metric}, Moreover, the concept has been extended to more general structures, including two-normed spaces \cite{FCF}. These generalizations have significantly broadened the scope of application of the concept.

As spaces with more intricate structures have emerged, particularly those by an indefinite inner product \cite{Azizov, Bognar}, various authors have sought to adapt and extend classical concepts of functional analysis to this new context. A notable example of this line of research is the work presented in \cite{FNG}, which explores this concept within the framework of Krein spaces. This generalization has enabled the transfer of fundamental tools from the classical theory of bounded variation to spaces with an indefinite metric, thereby enriching the study of functions and operators in these settings.

A fundamental problem we address in this work is the possibility of constructing a space endowed with an indefinite two-inner product from a space with an indefinite inner product, thus extending the classical structure of Krein spaces. We show that it is indeed possible to generate a two-Krein space from a classical Krein space, preserving and adapting essential properties such as orthogonal decomposition and fundamental symmetry.

Furthermore, we establish that this construction is not only compatible with the structure of the original space, but also allows for the transfer of relevant analytical properties. In particular, we demonstrate that every function of bounded variation in the Krein space induces a function of bounded $t$-variation function in the standardized two-Krein space. This result shows that the notion of bounded variation in spaces with an indefinite inner product can be viewed as a particular case within the more general theory we develop for spaces with an indefinite two-inner product, thereby consolidating the validity and scope of the proposed generalization.

\section{Preliminaries}
\subsection{Krein spaces}

\begin{definition}\cite{Azizov, Bognar}
A \textbf{Krein space} is a pair $(\mathcal{F}, [\cdot,\cdot])$, where $\mathcal{F}$ is a vector space and $[\cdot,\cdot]$ is an indefinite inner product, such that there exists a direct sum decomposition $\mathcal{F} = \mathcal{F}^{+} \dot{[+]} \mathcal{F}^{-}$  
with $(\mathcal{F}^{+}, [\cdot,\cdot])$ and $(\mathcal{F}^{-}, -[\cdot,\cdot])$ forming Hilbert spaces.

\end{definition}

\begin{definition}  
\cite{Azizov, Bognar}  
Given a Krein space $\mathcal{F}$ with decomposition $\mathcal{F} = \mathcal{F}^{-} \dot{[+]} \mathcal{F}^{+}$, the \textbf{fundamental symmetry} is the operator $\mathcal{J}:\mathcal{F} \to \mathcal{F}$ given by  
\[
\mathcal{J}x = x^{+} - x^{-}.
\]  
\end{definition}

\begin{remark}
The fundamental symmetry has the following properties:  
\begin{itemize}  
\item $\mathcal{J}=\mathcal{J}^{-1}$.  
\item $[\mathcal{J}x,y]=[x,\mathcal{J}y]$ para todo $x,y\in\mathcal{F}$.  
\item $\mathcal{J}$ is an isometric operator.  
\end{itemize}  
\end{remark}

\begin{definition}\cite{Azizov, Bognar}
Let $(\mathcal{F} = \mathcal{F}^{+} \dot{[+]} \mathcal{F}^{-}, [\cdot, \cdot],\mathcal{J})$ be a Krein space. The map \\$[\cdot, \cdot]_{\mathcal{J}}: \mathcal{F} \times \mathcal{F} \to \mathbb{C}$ is given by  
\[
[x, y]_{\mathcal{J}} = [\mathcal{J}x, y], \quad x, y \in \mathcal{F}.
\]  
This map is referred to as the \textbf{$\mathcal{J}$-inner product}.
\end{definition}

\begin{remark}
In any $\mathcal{J}$-inner product, it holds that $[x,x] \geq 0$ and $[x,x] = 0 \iff x = 0$.
\end{remark}

\begin{theorem}\cite{Azizov, Bognar}  
Let $(\mathcal{F} = \mathcal{F}^{+} \dot{[+]} \mathcal{F}^{-}, [\cdot , \cdot ])$ be a Krein space, and consider \(\mathcal{J}\) as the fundamental symmetry linked to the specified decomposition. Then, the following holds: 
$$
\vert [x, y] \vert \leq \Vert x \Vert_{\mathcal{J}} \Vert y \Vert_{\mathcal{J}}, \, \, \, x, y \in \mathcal{F}.
$$
\end{theorem}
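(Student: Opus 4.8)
The plan is to prove the Cauchy–Schwarz-type inequality for the indefinite inner product by reducing it to the ordinary Cauchy–Schwarz inequality in the associated Hilbert space $(\mathcal{F}, [\cdot,\cdot]_{\mathcal{J}})$. First I would recall that the $\mathcal{J}$-inner product $[x,y]_{\mathcal{J}} = [\mathcal{J}x, y]$ is a genuine (positive-definite) inner product on $\mathcal{F}$, as guaranteed by the preceding remark, so that $\|x\|_{\mathcal{J}} := [x,x]_{\mathcal{J}}^{1/2}$ is a bona fide Hilbert-space norm. The key algebraic observation is that the original indefinite form can be rewritten in terms of the $\mathcal{J}$-inner product: since $\mathcal{J} = \mathcal{J}^{-1}$, we have $[x,y] = [\mathcal{J}(\mathcal{J}x), y] = [\mathcal{J}x, y]_{\mathcal{J}}$ for all $x,y \in \mathcal{F}$.

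Second, with this identity in hand I would apply the ordinary Cauchy–Schwarz inequality to the Hilbert inner product $[\cdot,\cdot]_{\mathcal{J}}$, obtaining
\[
|[x,y]| = |[\mathcal{J}x, y]_{\mathcal{J}}| \leq \|\mathcal{J}x\|_{\mathcal{J}}\, \|y\|_{\mathcal{J}}.
\]
The final step is to show that $\mathcal{J}$ is an isometry with respect to $\|\cdot\|_{\mathcal{J}}$, i.e.\ $\|\mathcal{J}x\|_{\mathcal{J}} = \|x\|_{\mathcal{J}}$. This follows from the listed properties of the fundamental symmetry: using that $\mathcal{J}$ is self-adjoint with respect to $[\cdot,\cdot]$ (so $[\mathcal{J}x, y] = [x, \mathcal{J}y]$) together with $\mathcal{J}^2 = \mathrm{id}$, one computes $\|\mathcal{J}x\|_{\mathcal{J}}^2 = [\mathcal{J}(\mathcal{J}x), \mathcal{J}x] = [\mathcal{J}^2 x, \mathcal{J}x] = [x, \mathcal{J}x] = [\mathcal{J}x, x] = [\mathcal{J}x,x]$, which after re-expressing equals $[\mathcal{J}x,x] = \|x\|_{\mathcal{J}}^2$. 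Substituting this back yields exactly $|[x,y]| \leq \|x\|_{\mathcal{J}}\,\|y\|_{\mathcal{J}}$.

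I do not expect a serious obstacle here, since the statement is essentially the classical Cauchy–Schwarz inequality transported through the fundamental symmetry; the only point requiring care is the bookkeeping of the defining identity $[x,y] = [\mathcal{J}x,y]_{\mathcal{J}}$ and verifying the $\mathcal{J}$-isometry cleanly, both of which rely only on the three properties of $\mathcal{J}$ recorded in the remark ($\mathcal{J} = \mathcal{J}^{-1}$, self-adjointness, and isometry) and on the positive-definiteness of $[\cdot,\cdot]_{\mathcal{J}}$. An alternative, fully self-contained route would be to bypass the isometry argument by invoking the fundamental decomposition $x = x^+ + x^-$, $y = y^+ + y^-$ directly, splitting $[x,y] = [x^+,y^+] - (-[x^-,y^-])$ and applying Cauchy–Schwarz in each of the two Hilbert components $(\mathcal{F}^+,[\cdot,\cdot])$ and $(\mathcal{F}^-,-[\cdot,\cdot])$, then recombining; I would keep this as a backup in case a reviewer prefers an argument that does not lean on the operator identities.
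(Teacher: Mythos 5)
Your proof is correct: the identity $[x,y]=[\mathcal{J}x,y]_{\mathcal{J}}$ (valid since $\mathcal{J}^{2}=\mathrm{id}$), ordinary Cauchy--Schwarz for the positive-definite $\mathcal{J}$-inner product, and the computation $\Vert\mathcal{J}x\Vert_{\mathcal{J}}=\Vert x\Vert_{\mathcal{J}}$ via self-adjointness of $\mathcal{J}$ together give exactly the claimed inequality. Note that the paper itself gives no proof of this statement --- it is quoted as a classical result from \cite{Azizov, Bognar} --- and your argument (as well as your backup via the fundamental decomposition and componentwise Cauchy--Schwarz) is precisely the standard proof found in those references, so there is nothing to reconcile.
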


\begin{proposition}\cite{Azizov, Bognar}
In the Krein space $(\mathcal{F}=\mathcal{F}^+\dot{[+]}\mathcal{F}^-,[\cdot,\cdot])$, the fundamental symmetry $\mathcal{J}$ determines a norm on $\mathcal{F}$, given by  
$$
\|x\|_{\mathcal{J}} = \sqrt{[x,x]_{\mathcal{J}}}, \quad \forall x \in \mathcal{F}.
$$
This norm is referred to as the \textbf{$\mathcal{J}$-norm} of the Krein space $\mathcal{F}$.
\end{proposition}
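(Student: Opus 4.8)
The plan is to observe that the map $[\cdot,\cdot]_{\mathcal{J}}$ is itself a genuine positive-definite inner product on $\mathcal{F}$, so that $\|\cdot\|_{\mathcal{J}}$ is precisely the norm canonically induced by it; the work then reduces to verifying the inner-product axioms for $[\cdot,\cdot]_{\mathcal{J}}$ and invoking the standard passage from an inner product to its induced norm.

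First I would check that $[\cdot,\cdot]_{\mathcal{J}}$ is sesquilinear: since $\mathcal{J}$ is linear and $[\cdot,\cdot]$ is sesquilinear, the composition $[x,y]_{\mathcal{J}}=[\mathcal{J}x,y]$ is linear in $x$ and conjugate-linear in $y$. Next I would establish Hermitian symmetry using the identity $[\mathcal{J}x,y]=[x,\mathcal{J}y]$ together with the Hermitian symmetry of $[\cdot,\cdot]$, computing
$$
[x,y]_{\mathcal{J}}=[\mathcal{J}x,y]=[x,\mathcal{J}y]=\overline{[\mathcal{J}y,x]}=\overline{[y,x]_{\mathcal{J}}}.
$$
Positive definiteness is then immediate, being exactly the content of the preceding remark: $[x,x]_{\mathcal{J}}\geq 0$, with equality if and only if $x=0$.

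With these three facts in hand, I would verify the norm axioms for $\|x\|_{\mathcal{J}}=\sqrt{[x,x]_{\mathcal{J}}}$. Non-negativity and the separation property $\|x\|_{\mathcal{J}}=0\iff x=0$ transfer directly from positive definiteness, and homogeneity follows from sesquilinearity via $\|\lambda x\|_{\mathcal{J}}^{2}=[\lambda x,\lambda x]_{\mathcal{J}}=|\lambda|^{2}[x,x]_{\mathcal{J}}$.

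The triangle inequality is the step I expect to carry the real weight. For it I would first secure a Cauchy--Schwarz inequality in the $\mathcal{J}$-inner product, $|[x,y]_{\mathcal{J}}|\leq\|x\|_{\mathcal{J}}\|y\|_{\mathcal{J}}$. This can be read off from the preceding theorem applied to the pair $(\mathcal{J}x,y)$ once one notes that $\mathcal{J}$ is a $\|\cdot\|_{\mathcal{J}}$-isometry: indeed $[\mathcal{J}x,\mathcal{J}x]_{\mathcal{J}}=[\mathcal{J}^{2}x,\mathcal{J}x]=[x,\mathcal{J}x]=[\mathcal{J}x,x]=[x,x]_{\mathcal{J}}$ by $\mathcal{J}^{2}=I$ and the symmetry identity, so $\|\mathcal{J}x\|_{\mathcal{J}}=\|x\|_{\mathcal{J}}$ and hence $|[x,y]_{\mathcal{J}}|=|[\mathcal{J}x,y]|\leq\|\mathcal{J}x\|_{\mathcal{J}}\|y\|_{\mathcal{J}}=\|x\|_{\mathcal{J}}\|y\|_{\mathcal{J}}$ (alternatively, this inequality is the standard one valid for any positive-definite Hermitian form). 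Expanding the square then yields
$$
\|x+y\|_{\mathcal{J}}^{2}=\|x\|_{\mathcal{J}}^{2}+2\,\mathrm{Re}\,[x,y]_{\mathcal{J}}+\|y\|_{\mathcal{J}}^{2}\leq\bigl(\|x\|_{\mathcal{J}}+\|y\|_{\mathcal{J}}\bigr)^{2},
$$
which gives the triangle inequality and completes the verification that $\|\cdot\|_{\mathcal{J}}$ is a norm.
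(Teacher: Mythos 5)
Your proof is correct, but note that the paper itself contains no proof of this proposition: it appears in the Preliminaries as a result quoted from the cited literature on Krein spaces, so there is no internal argument to compare yours against. What you give is the canonical argument, and every step checks out: $[\cdot,\cdot]_{\mathcal{J}}$ is a positive-definite Hermitian sesquilinear form, hence $\|x\|_{\mathcal{J}}=\sqrt{[x,x]_{\mathcal{J}}}$ is its induced norm, with the triangle inequality supplied by Cauchy--Schwarz. Two small observations. First, your appeal to the paper's preceding theorem $|[x,y]|\le\|x\|_{\mathcal{J}}\|y\|_{\mathcal{J}}$ is not circular, since $\|x\|_{\mathcal{J}}$ is a well-defined nonnegative quantity (by the positivity remark) before one knows it satisfies the norm axioms; still, your parenthetical alternative --- Cauchy--Schwarz for an arbitrary positive-definite Hermitian form --- is the cleaner, self-contained route, and your verification that $\mathcal{J}$ is a $\|\cdot\|_{\mathcal{J}}$-isometry via $\mathcal{J}^{2}=I$ and $[\mathcal{J}x,y]=[x,\mathcal{J}y]$ is exactly right. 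Second, instead of citing the paper's (also unproved) remark for positive definiteness, you could derive it in one line from the fundamental decomposition: writing $x=x^{+}+x^{-}$ and using the $[\cdot,\cdot]$-orthogonality of $\mathcal{F}^{+}$ and $\mathcal{F}^{-}$, one gets $[x,x]_{\mathcal{J}}=[x^{+}-x^{-},x^{+}+x^{-}]=[x^{+},x^{+}]-[x^{-},x^{-}]=\|x^{+}\|_{+}^{2}+\|x^{-}\|_{-}^{2}\ge 0$, with equality iff $x^{+}=x^{-}=0$, i.e.\ $x=0$; this would make your proof fully self-contained and tie the norm explicitly to the Hilbert-space structure of the two components.
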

\begin{remark}
The Hilbert spaces $(\mathcal{F}^{+},[\cdot,\cdot])$ and $(\mathcal{F}^{-},-[\cdot,\cdot])$ have the following associated norms, respectively:  
$$  
\|x^{+}\|_{+}=\sqrt{[x^{+},x^{+}]},\quad \|x^{-}\|_{-}=\sqrt{-[x^{-},x^{-}]},\quad \text{for all}\ x^{+}\in \mathcal{F}^{+},\ x^{-}\in \mathcal{F}^{-}.  
$$  
\end{remark}

\begin{theorem}\label{equiv}\cite{Azizov, Bognar}
In a Krein space $(\mathcal{F}, [\cdot , \cdot ])$, any two fundamental decompositions induce equivalent norms via their respective fundamental symmetries.
\end{theorem}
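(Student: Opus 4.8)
The plan is to fix two fundamental decompositions $\mathcal{F} = \mathcal{F}_1^{+}\,\dot{[+]}\,\mathcal{F}_1^{-} = \mathcal{F}_2^{+}\,\dot{[+]}\,\mathcal{F}_2^{-}$, with associated fundamental symmetries $\mathcal{J}_1,\mathcal{J}_2$ and $\mathcal{J}$-norms $\|\cdot\|_{\mathcal{J}_1},\|\cdot\|_{\mathcal{J}_2}$, and to produce constants $c,C>0$ with $c\|x\|_{\mathcal{J}_1}\le\|x\|_{\mathcal{J}_2}\le C\|x\|_{\mathcal{J}_1}$ for every $x\in\mathcal{F}$. The engine of the argument is the closed graph theorem. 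First I would record that each $\mathcal{J}$-norm is complete: since $\|x\|_{\mathcal{J}_i}^2 = \|x_i^{+}\|_{+}^2 + \|x_i^{-}\|_{-}^2$, the space $(\mathcal{F},\|\cdot\|_{\mathcal{J}_i})$ is isometric to the orthogonal direct sum of the Hilbert spaces $(\mathcal{F}_i^{+},[\cdot,\cdot])$ and $(\mathcal{F}_i^{-},-[\cdot,\cdot])$, hence is itself a Hilbert space. Therefore both $(\mathcal{F},\|\cdot\|_{\mathcal{J}_1})$ and $(\mathcal{F},\|\cdot\|_{\mathcal{J}_2})$ are Banach spaces and the identity $I:(\mathcal{F},\|\cdot\|_{\mathcal{J}_1})\to(\mathcal{F},\|\cdot\|_{\mathcal{J}_2})$ is a linear bijection.

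Second, I would verify that the graph of $I$ is closed. Suppose $x_n\to x$ in $\|\cdot\|_{\mathcal{J}_1}$ and $x_n\to y$ in $\|\cdot\|_{\mathcal{J}_2}$; the aim is $x=y$. The decisive tool is the Cauchy--Schwarz inequality $|[u,v]|\le\|u\|_{\mathcal{J}}\|v\|_{\mathcal{J}}$, applied with each symmetry in turn. For an arbitrary $z\in\mathcal{F}$ it gives $|[x_n-x,z]|\le\|x_n-x\|_{\mathcal{J}_1}\|z\|_{\mathcal{J}_1}\to 0$ and $|[x_n-y,z]|\le\|x_n-y\|_{\mathcal{J}_2}\|z\|_{\mathcal{J}_2}\to 0$, so that $[x,z]=\lim_n[x_n,z]=[y,z]$, i.e. $[x-y,z]=0$ for every $z$.

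Third, I would invoke the non-degeneracy of the indefinite inner product to conclude $x=y$. Taking $z=\mathcal{J}_1(x-y)$ and using $[\mathcal{J}_1 u,u]=\|u\|_{\mathcal{J}_1}^2$ together with $[u,\mathcal{J}_1 u]=[\mathcal{J}_1 u,u]$ yields $\|x-y\|_{\mathcal{J}_1}^2=[x-y,\mathcal{J}_1(x-y)]=0$, hence $x=y$. With the graph of $I$ now closed, the closed graph theorem supplies a constant $C$ with $\|x\|_{\mathcal{J}_2}\le C\|x\|_{\mathcal{J}_1}$; applying the bounded inverse theorem, or equivalently rerunning the same argument with the two decompositions interchanged, produces $c>0$ with $c\|x\|_{\mathcal{J}_1}\le\|x\|_{\mathcal{J}_2}$, which is the asserted equivalence.

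I expect the main obstacle to be the closedness of the graph, and specifically the passage from $[x-y,z]=0$ for all $z$ to $x=y$: this is exactly where the indefinite geometry is used, and it rests on each $\mathcal{J}$-inner product being positive definite, so that $z=\mathcal{J}_1(x-y)$ is an admissible test vector detecting the difference $x-y$. Everything else, namely the completeness of the two norms and the weak continuity of $[\cdot,\cdot]$ furnished by Cauchy--Schwarz, is routine once this non-degeneracy is in hand.
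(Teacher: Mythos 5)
Your proof is correct. Note that the paper does not prove this statement at all: Theorem \ref{equiv} is quoted from the cited references (Azizov--Iokhvidov, Bogn\'ar), so there is no in-paper argument to compare against. Your closed-graph argument --- completeness of each $\mathcal{J}$-norm from the orthogonal Hilbert-space decomposition, closedness of the graph of the identity via the Cauchy--Schwarz inequality $|[u,v]|\le\|u\|_{\mathcal{J}}\|v\|_{\mathcal{J}}$, the test vector $z=\mathcal{J}_1(x-y)$ to exploit positive definiteness of the $\mathcal{J}_1$-inner product, and then the closed graph and bounded inverse theorems --- is precisely the classical proof found in those references, and every step you use (orthogonality of the components in the fundamental decomposition, the symmetry $[\mathcal{J}u,v]=[u,\mathcal{J}v]$, completeness of $(\mathcal{F}^{\pm},\pm[\cdot,\cdot])$) is available from the paper's preliminaries.
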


\begin{remark}
Let $(\mathcal{F} = \mathcal{F}^{+} \dot{[+]} \mathcal{F}^{-}, [\cdot, \cdot], \mathcal{J})$ be a Krein space, $[a,b]$ an interval, and let $f: [a,b] \to \mathcal{F} = \mathcal{F}^{+} \dot{[+]} \mathcal{F}^{-}$. Taking into account that, for any $t$ in $[a,b]$, $f(t)$ belongs to \\$\mathcal{F} = \mathcal{F}^{+} \dot{[+]} \mathcal{F}^{-}$, from now on we will write the image of $t$ under $f$ as $f(t) = f^{+}(t) + f^{-}(t)$.
\end{remark}

\begin{definition}\cite{FNG}\label{fuertemente de variacion acotada}
Let $(\mathcal{F} = \mathcal{F}^{+} \dot{[+]} \mathcal{F}^{-},[\cdot,\cdot])$ be a Krein space, and let $f$ be a function defined on the interval $[a,b]$. We say that $f$ is \emph{strongly of bounded variation} on $[a,b]$ in $\mathcal{F}$ if
$$
V_a^b(f,(\mathcal{F},[\cdot,\cdot])) = \sup\left\{ \sum_{i=1}^n \left( \|f^{+}(t_i) - f^{+}(t_{i-1})\|_{+} + \|f^{-}(t_i) - f^{-}(t_{i-1})\|_{-} \right) : P \in \mathcal{P}[a,b] \right\}
$$
is finite.
\end{definition}

\subsection{On two-normed spaces}

The study of two-normed spaces has garnered significant interest within the mathematical community, as evidenced by the various generalizations developed from this concept. In our case, we build upon the work carried out by Lewandowska, considering a particular case in which the two-norm is a symmetric mapping taking elements from the same vector space, specifically where $\mathcal{D} = F \times F$. Below, we present the formal definition that will be used in this context.

\begin{definition}\cite{Gahler, lewandowska}
Let $F$ to be a complex vector space of dimension $d$, where  $2\leq d\leq \infty$. A \textbf{two-norm} on $F$ is a function $\mathcal{N}:F \times F\to\mathbb{R}$ that satisfies the following conditions:
\begin{enumerate}
\item[$(2N_1)$] $\mathcal{N}(g,\alpha u)=|\alpha|\mathcal{N}(g,u)$, for all  $\alpha \in \mathbb{C}$;
\item[$(2N_2)$]\label{2N4} $\mathcal{N}(g,u+h) \leq \mathcal{N}(g,u)+\mathcal{N}(g,h)$;
\item[$(2N_3)$] $\mathcal{N}(g,u)=\mathcal{N}(u,g)$.
\end{enumerate}
If $\mathcal{N}$ is a two-norm for $F$, then the pair $(F,\mathcal{N})$ is called a \textbf{two-normed space}. 
\end{definition}

\begin{definition}\cite{cho}
Let $F$ as a complex vector space of dimension $d \geq 2$. A two-inner product is a function
$$
\psi: F \times F \times F \to \mathbb{C}
$$
that satisfies the following conditions:
\begin{enumerate}  \item[$(2\text{I}_1)$] $\psi\left(g_{1}+g_{2}, u , h\right)=\psi\left(g_{1}, u , h\right)+\psi\left(g_{2}, u , h\right)$.
\item[$(2\text{I}_2)$]  $\psi(g, g , h)=\psi(h, h , g)$;
  \item[$(2\text{I}_3)$]  $\psi(u, g , h)=\overline{\psi(g, u , h)}$;
  \item[$(2\text{I}_4)$]  $\psi(\alpha g, u , h)=\alpha \psi(g, u , h)$ for all $\alpha \in \mathbb{C}$;
  \item[$(2\text{I}_5)$]  $\psi(g, g , h) \geq 0$
\end{enumerate}
The pair $(F,\psi)$ is called a two-inner product space (or pre-two-Hilbert space).
\end{definition}

\begin{remark}
Note that in $(2\text{I}_{3})$, $\psi(u, g , h)=\overline{\psi(g, u , h)}$. If $u=g$, we obtain $$\psi(g, g , h)=\overline{\psi(g, g , h)},$$ which guarantees $\psi(g, g , h) \in \mathbb{R}$.
\end{remark}
 \begin{proposition}\label{2schw}\cite{AJO}
 Let $(F,\psi)$ a space with two-inner product and $g, u, h \in F.$ Then it holds that:
$$
|\psi(g, u , h)|^{2} \leq \psi(g, g , h)\psi(u, u , h).
$$
The inequality above is the analogue of the Cauchy-Schwarz inequality in spaces with inner product.
\end{proposition}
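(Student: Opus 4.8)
The plan is to reproduce the classical Cauchy--Schwarz argument, treating the third slot $h$ as a fixed parameter so that the map $(g,u)\mapsto\psi(g,u,h)$ plays the role of a positive semi-definite Hermitian form. First I would record the algebraic facts needed: by $(2\mathrm{I}_1)$ and $(2\mathrm{I}_4)$ the form is linear in its first argument, and combining these with the conjugate symmetry $(2\mathrm{I}_3)$ shows it is conjugate-linear in its second argument; moreover $(2\mathrm{I}_5)$ together with the preceding remark guarantees that $A:=\psi(g,g,h)$ and $D:=\psi(u,u,h)$ are real and nonnegative.

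Writing also $B:=\psi(g,u,h)$, I would next expand the nonnegative quantity $\psi(g-\lambda u,\,g-\lambda u,\,h)\ge 0$, valid for every $\lambda\in\mathbb{C}$ by $(2\mathrm{I}_5)$. Using linearity in the first slot, the derived conjugate-linearity in the second, and $(2\mathrm{I}_3)$ to identify $\psi(u,g,h)=\overline{B}$, this expansion collapses to the real expression
\[
A-\overline{\lambda}B-\lambda\overline{B}+|\lambda|^2 D\ge 0.
\]

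The decisive step is the choice of $\lambda$. When $D>0$ I would substitute $\lambda=B/D$, which reduces the inequality to $A-|B|^2/D\ge 0$, that is $AD\ge|B|^2$, precisely the claimed bound. The degenerate case $D=0$ must be handled separately: here the inequality becomes $A-2\,\mathrm{Re}(\lambda\overline{B})\ge 0$ for all $\lambda$, and taking $\lambda=sB$ with $s\to+\infty$ forces $B=0$, so $|B|^2\le AD$ holds trivially.

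I expect the main obstacle to be essentially bookkeeping: the second-argument conjugate-linearity is a derived rather than a postulated property, so one must route every manipulation through $(2\mathrm{I}_3)$ and track carefully where each conjugate lands when expanding the mixed terms. The separate treatment of the degenerate slot $D=0$ is the only conceptual subtlety, and it is routine once one observes that the same quadratic inequality still governs that case.
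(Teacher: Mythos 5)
Your proof is correct. Note that the paper itself gives no proof of this proposition: it is quoted as a known result from the reference \cite{AJO}, so there is no in-paper argument to compare against. Your route is the standard one: with $h$ frozen, the axioms $(2\mathrm{I}_1)$, $(2\mathrm{I}_3)$, $(2\mathrm{I}_4)$, $(2\mathrm{I}_5)$ make $(g,u)\mapsto\psi(g,u,h)$ a positive semi-definite Hermitian sesquilinear form, and expanding $\psi(g-\lambda u,g-\lambda u,h)\ge 0$ with the choice $\lambda=B/D$ (plus the separate $D=0$ case, which is genuinely needed here since the paper's axioms impose no definiteness in the first two slots) yields the inequality. This is exactly the classical Cauchy--Schwarz argument and is sound as written.
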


\begin{remark}\label{two-norma inducida} Given a space with two-inner product $\left(F,\psi\right)$, we can define a two-norm for $F$, called the {\bf induced two-norm} of the two-inner product given by
$$\mathcal{N}(g,h):=\sqrt{\psi(g,g,h)}\quad g,h\in F.$$
\end{remark}

\begin{definition}\cite{white}
A sequence $\left\{g_{n}\right\}_{n  \in\mathbb{N}}$ in a two-normed space $F$ is called a convergent sequence if there exists a $g \in F$ such that the $\displaystyle\lim_{n\to\infty} \mathcal{N}\left(g_{n}-g, u\right)=0$ for all $u \in F$. If $\left\{g_{n}\right\}_{n  \in\mathbb{N}}$ converges to $x$, we write $g_{n} \to g$ and we call $g$ the limit of $\left\{g_{n}\right\}_{n  \in\mathbb{N}}$.
\end{definition}
\begin{definition}\cite{kazemi}
Let $(F,\mathcal{N}(\cdot,\cdot))$ a two-normed space, $h\in F$. A sequence $\left\{g_{n}\right\}_{n  \in\mathbb{N}}$ in $F$ is called $h$-Cauchy if for all $\epsilon>0$, there exists $N\in\mathbb{N}$, such that if $m,n>N$ then \\$\mathcal{N}\left(g_{n}-g_{m}, h\right)<\epsilon$.
\end{definition}

\begin{definition}\cite{lewandowska}
A space with two-inner product $\left(\mathcal{H},\psi\right)$ is said to be a {\bf two-Hilbert} space if it is complete with respect to the two-norm induced by the two-inner product.
\end{definition}

\begin{definition}
\cite{orto} Let $(F, \psi)$ be a space with a two-inner product, and let $g, u \in F$. We say that $g$ and $u$ are \emph{two-orthogonal} in $F$, denoted by $g \perp u$, if for all $h \in F$ the following holds $\psi(g, u , h) = 0$.
\end{definition}

\begin{definition}\cite{FCF} \label{2kvar}Let $(F,\mathcal{N})$ be a two-normed space, $h \in F$, $[a,b]$ a closed interval, and $f\colon [a,b] \to F$ a function. The $(2,h)$-variation of $f$ over $[a,b]$, denoted by $V_{a}^{b}(f,F,h)$, is defined as:
\[
V_{a}^{b}(f,F,h) = \sup \left\{ \sum_{i=1}^{n} \mathcal{N}( f(t_i) - f(t_{i-1}), h) : P = \{t_0, t_1, \dots, t_n\} \in \mathcal{P}([a,b]) \right\}.
\]
If $V_{a}^{b}(f,F,h) < \infty$, the function $f$ is called of bounded $(2,h)$-variation.
\end{definition}

We now present the notion of an indefinite two-metric, thereby extending the definition given by Gähler \cite{Gahler}.

\section{Main Results
}
We now introduce an indefinite two-metric, derived from the structure of a Krein space, which preserves the interaction between its positive and negative components.
\begin{proposition}\label{k2k}
If $(\mathcal{F} = \mathcal{F}^{+}[\overset{\cdot}{+}]\mathcal{F}^{-}, [\cdot, \cdot], \mathcal{J})$ is a Krein space, then the application \\$\psi \colon \mathcal{F} \times \mathcal{F} \times \mathcal{F} \longrightarrow \mathbb{C}$ defined by
\begin{equation}\label{two-krein ind}
\psi(x,y , z) = [x^{+}, y^{+}] \left\| z^{+} \right\|^{2}_{+} + [x^{-}, y^{-}] \left\| z^{-} \right\|^{2}_{-}
\end{equation}
satisfies the following properties:

\begin{enumerate}[itemsep=0pt,label=\protect\mycirc{\arabic*}]
  \item $\psi\left(x_{1}+x_{2}, y , z\right)=\psi\left(x_{1}, y , z\right)+\psi\left(x_{2}, y , z\right)$.
  \item $\psi(x, x , z)=\psi(z, z , x)$;
  \item $\psi(y, x , z)=\overline{\psi(x, y , z)}$;
  \item $\psi(\alpha x, y , z)=\alpha\psi(x, y , z)$ for all $\alpha \in \mathbb{C}$.
  \item There exist $x_{1}, x_{2} \in X$ such that $\psi(x_{1}, x_{1} , z) > 0$ and $\psi(x_{2}, x_{2} , z) < 0$ for all $z \in \mathcal{F}$.
\end{enumerate}

\end{proposition}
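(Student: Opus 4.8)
The plan is to reduce every claim to the linearity of the fundamental projections together with the elementary algebraic properties of the indefinite inner product $[\cdot,\cdot]$. Write $P^{+}$ and $P^{-}$ for the projections of $\mathcal{F}$ onto $\mathcal{F}^{+}$ and $\mathcal{F}^{-}$ along the fundamental decomposition, so that $x^{+}=P^{+}x$ and $x^{-}=P^{-}x$; these are linear maps. It is also convenient to record the identities $\|z^{+}\|_{+}^{2}=[z^{+},z^{+}]$ and $\|z^{-}\|_{-}^{2}=-[z^{-},z^{-}]$, both real and nonnegative. With this notation, properties \mycirc{1} and \mycirc{4} are immediate: since $(x_{1}+x_{2})^{\pm}=x_{1}^{\pm}+x_{2}^{\pm}$ and $(\alpha x)^{\pm}=\alpha x^{\pm}$ by linearity of $P^{\pm}$, and since $[\cdot,\cdot]$ is additive and homogeneous in its first slot, both $[x^{+},y^{+}]$ and $[x^{-},y^{-}]$ are additive and homogeneous in $x$; multiplying by the fixed real scalars $\|z^{\pm}\|_{\pm}^{2}$ preserves these relations.

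For property \mycirc{3} I would use the conjugate symmetry $[u,v]=\overline{[v,u]}$ of the indefinite inner product. Since the weights $\|z^{\pm}\|_{\pm}^{2}$ are real, conjugating $\psi(x,y,z)$ gives $\overline{[x^{+},y^{+}]}\,\|z^{+}\|_{+}^{2}+\overline{[x^{-},y^{-}]}\,\|z^{-}\|_{-}^{2}=[y^{+},x^{+}]\,\|z^{+}\|_{+}^{2}+[y^{-},x^{-}]\,\|z^{-}\|_{-}^{2}=\psi(y,x,z)$. Property \mycirc{2} is the only one that genuinely exploits the specific product shape of the weights. Here I would simply expand both sides: using $[x^{+},x^{+}]=\|x^{+}\|_{+}^{2}$ and $[x^{-},x^{-}]=-\|x^{-}\|_{-}^{2}$ one obtains
\[
\psi(x,x,z)=\|x^{+}\|_{+}^{2}\,\|z^{+}\|_{+}^{2}-\|x^{-}\|_{-}^{2}\,\|z^{-}\|_{-}^{2},
\]
an expression manifestly invariant under the interchange of $x$ and $z$; the same computation applied to $\psi(z,z,x)$ yields the identical right-hand side, so $\psi(x,x,z)=\psi(z,z,x)$.

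For property \mycirc{5} (indefiniteness) I would exhibit the witnessing vectors directly. Pick any nonzero $x_{1}\in\mathcal{F}^{+}$; then $x_{1}^{-}=0$, so $\psi(x_{1},x_{1},z)=\|x_{1}\|_{+}^{2}\,\|z^{+}\|_{+}^{2}\ge 0$, which is strictly positive precisely when $z^{+}\neq 0$. Dually, pick any nonzero $x_{2}\in\mathcal{F}^{-}$; then $x_{2}^{+}=0$, so $\psi(x_{2},x_{2},z)=-\|x_{2}\|_{-}^{2}\,\|z^{-}\|_{-}^{2}\le 0$, strictly negative precisely when $z^{-}\neq 0$. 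This displays both signs and establishes indefiniteness. The only delicate point in the whole argument lies here: the strict inequalities genuinely require $z$ to have a nonvanishing component in the relevant subspace (for $z=0$ both values collapse to $0$), so the honest reading of \mycirc{5} is that the stated strict signs hold for every $z$ whose positive and negative parts are both nonzero. Apart from this caveat, no step presents a real obstacle; once the linearity of $P^{\pm}$ and the reality of the weights $\|z^{\pm}\|_{\pm}^{2}$ are in place, the verification is entirely bookkeeping.
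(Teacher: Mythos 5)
Your proof is correct and takes essentially the same route as the paper's: direct expansion via linearity of the fundamental projections, conjugate symmetry of $[\cdot,\cdot]$ with real weights for property \mycirc{3}, the identities $\|z^{+}\|_{+}^{2}=[z^{+},z^{+}]$ and $\|z^{-}\|_{-}^{2}=-[z^{-},z^{-}]$ for property \mycirc{2}, and witnesses chosen from $\mathcal{F}^{+}$ and $\mathcal{F}^{-}$ for property \mycirc{5}. Your caveat on \mycirc{5} is well taken and is in fact a point the paper glosses over: its own proof likewise only establishes $\psi(k^{+},k^{+},w)\ge 0$ and $\psi(l^{-},l^{-},w)\le 0$, so the strict signs claimed in the statement hold only for those $z$ whose positive and negative components are both nonzero.
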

\begin{proof}
Let $x, x', y, z \in \mathcal{F}$ and $\alpha \in \mathbb{C}$. Then,

$\protect\mycirc{1}$
\begin{align*}
\psi(x+x',y,z)&=[x^{+}+x'^{+},y^{+}]\left\| z^{+}\right\|^{2}_{+}+[x^{-}+x'^{-},y^{-}]\left\| z^{-}\right\|^{2}_{-}\\
&=[x^{+},y^{+}]\left\| z^{+}\right\|^{2}_{+}+[x'^{-},y^{-}]\left\| z^{-}\right\|^{2}_{-}+[x'^{+},y^{+}]\left\| z^{+}\right\|^{2}_{+}[x^{-},y^{-}]\left\| z^{-}\right\|^{2}_{-}\\
&=\psi(x,y,z)+\psi(x',y,z).
\end{align*}

$\protect\mycirc{2}$
\begin{align*}
\psi(\alpha x,y,z)&=[\alpha x^{+},y^{+}]\left\| z^{+}\right\|^{2}_{+}+[\alpha x^{-},y^{-}]\left\| z^{-}\right\|^{2}_{-}=\alpha\left([x^{+},y^{+}]\left\| z^{+}\right\|^{2}_{+}+[ x^{-},y^{-}]\left\| z^{-}\right\|_{-}\right)\\
&=\alpha\psi( x,y,z).
\end{align*}

$\protect\mycirc{3}$
\begin{align*}
\psi( y,x,z)&=[ y^{+},x^{+}]\left\| z^{+}\right\|_{+}^{2}+[y^{-},x^{-}]\left\| z^{-}\right\|_{-}^{2}=\overline{[ x^{+},y^{+}]}\left\| z^{+}\right\|_{+}^{2}+\overline{[ x^{-},y^{-}]}\left\| z^{-}\right\|_{-}^{2}\\
&=\overline{[ x^{+},y^{+}]\left\| z^{+}\right\|_{+}^{2}}+\overline{[ x^{-},y^{-}]\left\| z^{-}\right\|_{-}^{2}}=\overline{[ x^{+},y^{+}]\left\| z^{+}\right\|_{+}^{2}+[ x^{-},y^{-}]\left\| z^{-}\right\|_{-}^{2}}\\
&=\overline{\psi(x,y,z)}.
\end{align*}

$\protect\mycirc{4}$
\begin{align*}
\psi( x,x,z)&=[ x^{+},x^{+}]\left\| z^{+}\right\|_{+}^{2}+[x^{-},x^{-}]\left\| z^{-}\right\|_{-}^{2}\\
&=[ x^{+},x^{+}][z^{+},z^{+}]+[x^{-},x^{-}](-[z^{-},z^{-}])\\
&=[ x^{+},x^{+}][z^{+},z^{+}]+(-[x^{-},x^{-}])[z^{-},z^{-}]\\
&=[z^{+},z^{+}]\left\|x^{+}\right\|^{2}_{+}+[z^{-},z^{-}]\left\|x^{-}\right\|_{-}^{2}=\psi( z,z,x).
\end{align*}

$\protect\mycirc{5}$ There exist $(k^{+}, l^{-}) \in \mathcal{F}^{+} \times \mathcal{F}^{-}$ such that for any $w \in \mathcal{F}$, the following holds:
$$
\psi(k^{+},k^{+},w)=[k^{+},k^{+}]\|w^{+}\|^{2}_{+}+[\mathbf{0},\mathbf{0}]\|w^{-}\|^{2}_{-}=[k^{+},k^{+}]\|w^{+}\|^{2}_{+}\ge 0
$$
and
$$
\psi(l^{-},l^{-},w)=[\mathbf{0},\mathbf{0}]\|w^{+}\|^{2}_{+}+[l^{-},l^{-}]\|w^{-}\|^{2}_{-}=[l^{-},l^{-}]\|w^{-}\|^{2}_{-}\le 0.
$$
\end{proof}

\begin{definition}
Let $\mathcal{F}$ be a complex vector space of dimension $d \geq 2$. If $\psi: \mathcal{F} \times \mathcal{F} \times \mathcal{F} \to \mathbb{C}$ is a function that satisfies the conditions of Proposition \ref{k2k}, then we will call the pair $(\mathcal{F}, \psi)$ an indefinite two-inner product space.
\end{definition}

\begin{remark}\label{presesrvaort}
Note that for any $(k^{+}, l^{-}) \in \mathcal{F}^{+} \times \mathcal{F}^{-}$ and $w = w^{+} + w^{-} \in \mathcal{F}$, the following holds:
$$
\psi(k^{+}, l^{-} , w) = [k^{+}, \mathbf{0}] \|w^{+}\|_{+} + [\mathbf{0}, l^{-}] \|w^{-}\|_{-} = 0.
$$
Therefore, $\mathcal{F}^{+}$ and $\mathcal{F}^{-}$ preserve orthogonality in the indefinite two-inner product $\psi$.
\end{remark}

From now on, the space with the indefinite two-metric given in \eqref{two-krein ind} will be referred to as the standardized indefinite two-metric or standardized two-inner product, assuming it originates from a space with an indefinite metric $(\mathcal{F}, [\cdot, \cdot], \mathcal{J})$.

\begin{remark}\label{terceraj}
Note that given a space with a standardized indefinite two-metric, the following holds:
\begin{align*}
\psi( x,x ,\mathcal{J}t)&=\psi( x,x ,t^{+}-t^{-})=[ x^{+},x^{+}] \|t^{+}\|^{2}_{+}+[ x^{-},x^{-}] \|-t^{-}\|^{2}_{-}\\
&=[ x^{+},x^{+}] \|t^{+}\|^{2}_{+}+[ x^{-},x^{-}] \|t^{-}\|^{2}_{-}=\psi( x,x ,t).
\end{align*}
\end{remark}

\begin{remark}\label{two-normpositiveandnegative}
From Proposition \ref{k2k}, the two-norms associated with the spaces with two-inner product $(\mathcal{F}^{+}, \psi)$ and $(\mathcal{F}^{-}, -\psi)$ naturally follow. These two-norms are given as follows:
$$
\mathcal{N}^{+}(x^{+},y^{+})=\sqrt{\psi(x^{+},x^{+},y^{+})}=\sqrt{[x^{+},x^{+}]\left\| y^{+}\right\|_{+}^{2}}=\sqrt{[x^{+},x^{+}]}\sqrt{\left\| y^{+}\right\|_{+}^{2}}=\left\| x^{+}\right\|_{+}\left\| y^{+}\right\|_{+}\ge 0
$$
and
$$
\mathcal{N}^{-}(x^{-},y^{-})=\sqrt{-\psi(x^{-},x^{-},y^{-})}=\sqrt{-[x^{-},x^{-}]\left\| y^{-}\right\|_{-}^{2}}=\left\| x^{-}\right\|_{-}\left\| y^{-}\right\|_{-}\ge 0.
$$
\end{remark}

\begin{proposition}
Let $(\mathcal{F} = \mathcal{F}^{+}[\overset{\cdot}{+}]\mathcal{F}^{-}, [\cdot, \cdot], \mathcal{J})$ be a Krein space with the standardized two-inner product $\psi$ given in Proposition \ref{k2k}. The application $\psi_{\mathcal{J}}: \mathcal{F} \times \mathcal{F} \times \mathcal{F} \longrightarrow \mathbb{C}$ defined by
$$
\psi_{\mathcal{J}}(x, y , z) := \psi(\mathcal{J}x, y , z) \quad \text{for all } x, y, z \in \mathcal{F},
$$
is a positive semidefinite two-inner product, which, from now on, we will refer to as the associated $\mathcal{J}$-two-inner product to $\psi$, or simply the $\mathcal{J}$-two-inner product.
\end{proposition}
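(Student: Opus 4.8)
The plan is to reduce the verification to a single explicit formula for $\psi_{\mathcal{J}}$ and then check the five defining conditions $(2\text{I}_1)$--$(2\text{I}_5)$ of a two-inner product in turn. First I would record how the fundamental symmetry acts on the fundamental decomposition, namely $(\mathcal{J}x)^{+}=x^{+}$ and $(\mathcal{J}x)^{-}=-x^{-}$, and substitute $\mathcal{J}x$ into the defining expression \eqref{two-krein ind}. This gives the working identity
$$
\psi_{\mathcal{J}}(x,y,z)=\psi(\mathcal{J}x,y,z)=[x^{+},y^{+}]\,\|z^{+}\|_{+}^{2}-[x^{-},y^{-}]\,\|z^{-}\|_{-}^{2},
$$
from which every axiom can be read off. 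The sign change on the negative component, produced by $\mathcal{J}$, is the whole mechanism that converts the indefinite form $\psi$ of Proposition \ref{k2k} into a positive form.

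Next I would dispatch the linear and conjugate-symmetry axioms, which are essentially inherited. Additivity $(2\text{I}_1)$ and homogeneity $(2\text{I}_4)$ in the first slot follow at once from the linearity of $x\mapsto x^{\pm}$ and of $[\cdot,\cdot]$ in its first argument; equivalently, since $\mathcal{J}$ is linear, they descend directly from properties \protect\mycirc{1} and \protect\mycirc{2} of Proposition \ref{k2k}. For the conjugate symmetry $(2\text{I}_3)$ I would compute $\psi_{\mathcal{J}}(y,x,z)$ from the displayed formula and compare with $\overline{\psi_{\mathcal{J}}(x,y,z)}$, using $[u,v]=\overline{[v,u]}$ on each component and the reality of the factors $\|z^{\pm}\|^{2}$. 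Alternatively, this can be derived from property \protect\mycirc{3} of $\psi$ together with the $[\cdot,\cdot]$-self-adjointness $[\mathcal{J}x,y]=[x,\mathcal{J}y]$ of the fundamental symmetry, which makes the sign flip on the negative part consistent on both sides.

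The two steps that carry the actual content are the symmetry $(2\text{I}_2)$ and the positivity $(2\text{I}_5)$, and both rest on the identity $-[x^{-},x^{-}]=\|x^{-}\|_{-}^{2}$ valid in the anti-Hilbert space $(\mathcal{F}^{-},-[\cdot,\cdot])$, together with $[x^{+},x^{+}]=\|x^{+}\|_{+}^{2}$. Setting $y=x$ in the formula I obtain the closed form
$$
\psi_{\mathcal{J}}(x,x,z)=\|x^{+}\|_{+}^{2}\,\|z^{+}\|_{+}^{2}+\|x^{-}\|_{-}^{2}\,\|z^{-}\|_{-}^{2},
$$
which is manifestly nonnegative, yielding $(2\text{I}_5)$ and hence positive semidefiniteness. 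The same expression is visibly invariant under interchanging $x$ and $z$, which is precisely $(2\text{I}_2)$, that is $\psi_{\mathcal{J}}(x,x,z)=\psi_{\mathcal{J}}(z,z,x)$.

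I expect the main conceptual point, rather than any computational obstacle, to be the positivity step: it is exactly where the indefiniteness of $\psi$ is neutralized by $\mathcal{J}$. The one subtlety worth flagging is that the form is positive \emph{semi}definite and not definite: because of the factors $\|z^{\pm}\|^{2}$, one can have $\psi_{\mathcal{J}}(x,x,z)=0$ with $x\neq 0$ (for instance when $z$ has trivial component in the subspace carrying $x$), so no faithfulness or norm-definiteness claim should be asserted. Since the statement asks only for the positive semidefinite two-inner product axioms, the computation above suffices.
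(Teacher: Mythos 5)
Your proof is correct and follows essentially the same route as the paper: both arguments substitute $\mathcal{J}x = x^{+}-x^{-}$ into \eqref{two-krein ind} and use $-[x^{-},x^{-}]=\|x^{-}\|_{-}^{2}$ to obtain the manifestly nonnegative closed form $\psi_{\mathcal{J}}(x,x,z)=\|x^{+}\|_{+}^{2}\|z^{+}\|_{+}^{2}+\|x^{-}\|_{-}^{2}\|z^{-}\|_{-}^{2}$, which is exactly the paper's computation. The only difference is that you also verify the remaining axioms $(2\text{I}_1)$--$(2\text{I}_4)$ explicitly, which the paper leaves implicit as being inherited from Proposition \ref{k2k} via the linearity of $\mathcal{J}$.
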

\begin{proof}
Let $x, y, z \in \mathcal{F}$. Then,
\begin{align*}
\psi_{\mathcal{J}}( x,x,z)&=\psi(\mathcal{J} x,x,z)=\psi( x^{+}-x^{-},x^{+}+x^{-},z)=[x^{+},x^{+}]\left\| z^{+}\right\|^{2}_{+}-[x^{-},x^{-}]\left\| z^{-}\right\|^{2}_{-}\\
&=[x^{+},x^{+}]\left\| z^{+}\right\|^{2}_{+}+(-[x^{-},x^{-}])\left\| z^{-}\right\|^{2}_{-}\geq 0.
\end{align*}
\end{proof}
\begin{proposition}\label{j2norm}
Let $(\mathcal{F} = \mathcal{F}^{+}[\overset{\cdot}{+}]\mathcal{F}^{-}, [\cdot, \cdot], \mathcal{J})$ be a Krein space with the standardized two-inner product $\psi$ given in Proposition \ref{k2k}. Then, the application $\mathcal{N}_{\mathcal{J}}:\mathcal{F} \times \mathcal{F}\longrightarrow\mathbb{R}$  defined by $\mathcal{N}_{\mathcal{J}}(x, y) = \sqrt{\psi_{\mathcal{J}}(x, x , y)}$ is a two-norm over $\mathcal{F}$.

\end{proposition}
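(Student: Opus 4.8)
The plan is to verify, in turn, the three defining conditions $(2N_1)$, $(2N_2)$, $(2N_3)$ of a two-norm for $\mathcal{N}_{\mathcal{J}}$. The preceding proposition guarantees that $\psi_{\mathcal{J}}$ is a positive semidefinite two-inner product (its additivity, homogeneity and conjugate symmetry being inherited from $\psi$ through the linearity of $\mathcal{J}$), so in particular $\psi_{\mathcal{J}}(x,x,y)\ge 0$ and the square root defining $\mathcal{N}_{\mathcal{J}}$ is a well-defined real number. As the computational engine of the whole argument I would first record the closed form of $\psi_{\mathcal{J}}$: using $(\mathcal{J}x)^{+}=x^{+}$, $(\mathcal{J}x)^{-}=-x^{-}$ together with $[y^{+},y^{+}]=\|y^{+}\|_{+}^{2}$ and $[y^{-},y^{-}]=-\|y^{-}\|_{-}^{2}$, a direct computation yields
$$\psi_{\mathcal{J}}(x,x,y)=\|x^{+}\|_{+}^{2}\,\|y^{+}\|_{+}^{2}+\|x^{-}\|_{-}^{2}\,\|y^{-}\|_{-}^{2}.$$

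From this identity the two easy axioms follow immediately. For $(2N_3)$, the right-hand side is manifestly symmetric in $x$ and $y$, so $\mathcal{N}_{\mathcal{J}}(x,y)=\mathcal{N}_{\mathcal{J}}(y,x)$; equivalently one may simply cite axiom $(2\mathrm{I}_2)$ for $\psi_{\mathcal{J}}$. For $(2N_1)$, replacing $y$ by $\alpha y$ scales each factor $\|(\alpha y)^{\pm}\|_{\pm}^{2}$ by $|\alpha|^{2}$, so that $\psi_{\mathcal{J}}(x,x,\alpha y)=|\alpha|^{2}\,\psi_{\mathcal{J}}(x,x,y)$, and taking square roots gives $\mathcal{N}_{\mathcal{J}}(x,\alpha y)=|\alpha|\,\mathcal{N}_{\mathcal{J}}(x,y)$.

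The substantive step, and where I expect the main obstacle, is the triangle inequality $(2N_2)$. My plan is to establish it first in the repeated slot and then transfer it to the second slot by the symmetry $(2N_3)$ just obtained. Expanding $\psi_{\mathcal{J}}(x+x',x+x',z)$ by additivity $(2\mathrm{I}_1)$ and conjugate symmetry $(2\mathrm{I}_3)$ gives
$$\mathcal{N}_{\mathcal{J}}(x+x',z)^{2}=\mathcal{N}_{\mathcal{J}}(x,z)^{2}+2\,\mathrm{Re}\,\psi_{\mathcal{J}}(x,x',z)+\mathcal{N}_{\mathcal{J}}(x',z)^{2}.$$
The cross term is controlled by the Cauchy--Schwarz inequality for two-inner products (Proposition \ref{2schw}), which applies precisely because $\psi_{\mathcal{J}}$ is a two-inner product: $2\,\mathrm{Re}\,\psi_{\mathcal{J}}(x,x',z)\le 2|\psi_{\mathcal{J}}(x,x',z)|\le 2\,\mathcal{N}_{\mathcal{J}}(x,z)\,\mathcal{N}_{\mathcal{J}}(x',z)$. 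The right-hand side then collapses into the perfect square $\bigl(\mathcal{N}_{\mathcal{J}}(x,z)+\mathcal{N}_{\mathcal{J}}(x',z)\bigr)^{2}$, and taking square roots completes the step; applying $(2N_3)$ converts this into the required inequality $\mathcal{N}_{\mathcal{J}}(x,u+h)\le \mathcal{N}_{\mathcal{J}}(x,u)+\mathcal{N}_{\mathcal{J}}(x,h)$. An equally valid and more elementary route would bypass Cauchy--Schwarz entirely: starting from the closed form, apply the triangle inequalities of the Hilbert norms $\|\cdot\|_{+}$ and $\|\cdot\|_{-}$ inside the two factors and then Minkowski's inequality in $\mathbb{R}^{2}$. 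The delicate point in either approach is keeping the $\mathcal{F}^{+}$ and $\mathcal{F}^{-}$ components cleanly separated so the one-dimensional convexity estimate applies; once the closed form above is in hand, this separation is automatic and the remainder is routine.
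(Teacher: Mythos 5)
Your proof is correct and takes essentially the same route as the paper's: verify $(2N_1)$--$(2N_3)$ directly, obtaining the triangle inequality by expanding $\psi_{\mathcal{J}}$ in the repeated slot and invoking the Cauchy--Schwarz inequality of Proposition \ref{2schw} to collapse the bound into a perfect square. If anything, your write-up is cleaner: the paper's $(2N_2)$ display mixes up which slot carries the sum (its indices are inconsistent), whereas you prove the first-slot inequality and then transfer it to the second slot via the symmetry $(2N_3)$, which is the correct way to close that step.
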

\begin{proof}
Let $x,y,z\in\mathcal{F}$ and $\alpha\in\mathbb{R}$.
\begin{enumerate}
\item[$(2N_1)$] $\mathcal{N}_{\mathcal{J}}(x,\alpha y)=\sqrt{\psi_{\mathcal{J}}(x,x,\alpha y)}=\sqrt{|\alpha|^{2}\psi_{\mathcal{J}}(x,x, y)}=|\alpha|\sqrt{\psi_{\mathcal{J}}(x,x, y)}=|\alpha|\mathcal{N}_{\mathcal{J}}(x,y).$

\item[$(2N_2)$] Given that $\psi_{\mathcal{J}}$ is a semi-positive definite two-inner product, by Proposition \ref{2schw} it follows that
\begin{align*}
\mathcal{N}_{\mathcal{J}}(x,y+z)
 &=\sqrt{\psi_{\mathcal{J}}(x,x, y) }=\sqrt{\psi_{\mathcal{J}}(x,x, y)+\psi_{\mathcal{J}}(x,x, z)+2Re(\psi_{\mathcal{J}}(x,y, z))} \\
 &\leq \sqrt{\psi_{\mathcal{J}}(x,x, y)+\psi_{\mathcal{J}}(x,x, z)+2\left|\psi_{\mathcal{J}}(x,y, z)\right|} \\
 &\leq \sqrt{\mathcal{N}_{\mathcal{J}}(x,y)^{2}+\mathcal{N}_{\mathcal{J}}(x,z)^{2}+2\mathcal{N}_{\mathcal{J}}(x,z)\mathcal{N}_{\mathcal{J}}(y,z)} \\
 &=\sqrt{\left( \mathcal{N}_{\mathcal{J}}(x,z)+\mathcal{N}_{\mathcal{J}}(y,z)\right)^{2}} =\mathcal{N}_{\mathcal{J}}(x,z)+\mathcal{N}_{\mathcal{J}}(y,z).
\end{align*}
 
\item[$(2N_3)$] $\mathcal{N}_{\mathcal{J}}(x,y)=\sqrt{\psi_{\mathcal{J}}(x,x, y)}=\sqrt{\psi_{\mathcal{J}}(y,y, x)}=\mathcal{N}_{\mathcal{J}}(y,x).$
\end{enumerate}

We will refer to this two-norm simply as $\mathcal{J}$-two-norm.

\end{proof}

\begin{remark}
Note that the two-norms induced in the spaces with two-inner product $(\mathcal{F}^{+},\psi)$ and $(\mathcal{F}^{-},-\psi)$ are nothing more than a restriction of the $\mathcal{J}$-two-norm to these spaces.
$$
\mathcal{N}_{\mathcal{J}}(x^{+},y^{+}):
=\sqrt{\psi_{\mathcal{J}}(x^{+},x^{+},y^{+})_{\mathcal {J}}}=\sqrt{\psi(\mathcal Jx^{+},x^{+},y^{+})}=\sqrt{\psi(x^{+},x^{+},y^{+})}=\mathcal{N}^{{+}}(x^{+},y^{+})
$$
and
$$
\mathcal{N}_{\mathcal{J}}(x^{-},y^{-})=\sqrt{\psi_{\mathcal{J}}(x^{-},x^{-},y^{-})_{\mathcal{J}}}=\sqrt{\psi(\mathcal{J}x^{-},x^{-},y^{-})}=\sqrt{-\psi(x^{-},x^{-},y^{-})}=\mathcal{N}^{-}(x^{-},y^{-}),
$$
where $x^{+},y^{+}\in\mathcal{F}^{+}\text{ and }x^{-},y^{-}\in\mathcal{F}^{-}.$
\end{remark}

\begin{theorem}\label{Desigualdades de normas}
Let $(\mathcal{F} = \mathcal{F}^{+} \dot{[+]}\mathcal{F}^{-}, \psi, \mathcal{J})$ be a space with a standardized indefinite two-metric. Then,
$$\mathcal{N}_{\mathcal{J}}(x,z)\le \mathcal{N}^{+}( x^+ ,z^+ )+\mathcal{N}^{-}( x^- ,z^{-}),$$
for all $x=x^{+}+x^{-}\in\mathcal{F}$.
\end{theorem}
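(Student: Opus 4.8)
The plan is to reduce the claim to the elementary subadditivity of the square root, after first identifying $\mathcal{N}_{\mathcal{J}}(x,z)^{2}$ as a genuine sum of two nonnegative terms. The only conceptual point is to track correctly how the fundamental symmetry $\mathcal{J}$ acts on the two components.

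First I would unfold the definitions. Writing $\mathcal{J}x = x^{+}-x^{-}$, so that $(\mathcal{J}x)^{+}=x^{+}$ and $(\mathcal{J}x)^{-}=-x^{-}$, the definition of the $\mathcal{J}$-two-inner product gives
\[
\mathcal{N}_{\mathcal{J}}(x,z)^{2}=\psi_{\mathcal{J}}(x,x,z)=\psi(\mathcal{J}x,x,z)=[x^{+},x^{+}]\,\|z^{+}\|_{+}^{2}-[x^{-},x^{-}]\,\|z^{-}\|_{-}^{2}.
\]
Recalling that $[x^{+},x^{+}]=\|x^{+}\|_{+}^{2}$ and $-[x^{-},x^{-}]=\|x^{-}\|_{-}^{2}$, this reduces to
\[
\mathcal{N}_{\mathcal{J}}(x,z)^{2}=\|x^{+}\|_{+}^{2}\,\|z^{+}\|_{+}^{2}+\|x^{-}\|_{-}^{2}\,\|z^{-}\|_{-}^{2}=\mathcal{N}^{+}(x^{+},z^{+})^{2}+\mathcal{N}^{-}(x^{-},z^{-})^{2},
\]
where the final identity is exactly the explicit form of $\mathcal{N}^{+}$ and $\mathcal{N}^{-}$ recorded in Remark \ref{two-normpositiveandnegative}.

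Setting $\alpha=\mathcal{N}^{+}(x^{+},z^{+})\ge 0$ and $\beta=\mathcal{N}^{-}(x^{-},z^{-})\ge 0$, the theorem becomes the statement $\sqrt{\alpha^{2}+\beta^{2}}\le \alpha+\beta$. This follows at once by squaring the right-hand side: since $(\alpha+\beta)^{2}=\alpha^{2}+\beta^{2}+2\alpha\beta$ and $2\alpha\beta\ge 0$, we obtain $\alpha^{2}+\beta^{2}\le(\alpha+\beta)^{2}$, and taking nonnegative square roots yields the claim.

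There is no genuine obstacle here; the computation is short and the inequality is elementary. The only step demanding care is the sign bookkeeping in the first display, where the action of $\mathcal{J}$ on the negative component converts the indefinite quantity $[x^{-},x^{-}]\le 0$ into the nonnegative $\|x^{-}\|_{-}^{2}$. This is precisely what guarantees that $\mathcal{N}_{\mathcal{J}}(x,z)^{2}$ is a true sum of squares, so that the square-root inequality applies and produces the asserted bound.
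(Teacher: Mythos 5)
Your proof is correct and follows essentially the same route as the paper: both expand $\mathcal{N}_{\mathcal{J}}(x,z)^{2}=\psi(\mathcal{J}x,x,z)$ into the sum of squares $\mathcal{N}^{+}(x^{+},z^{+})^{2}+\mathcal{N}^{-}(x^{-},z^{-})^{2}$, then conclude via the elementary inequality $\sqrt{\alpha^{2}+\beta^{2}}\le\alpha+\beta$ (the paper phrases this as adding the nonnegative cross term $2\mathcal{N}^{+}\mathcal{N}^{-}$ before taking square roots). Your sign bookkeeping for the action of $\mathcal{J}$ on the negative component matches the paper's computation exactly.
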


\begin{proof}
\begin{align*}
\mathcal{N}_{\mathcal{J}}(x,z)^2&=\psi_{\mathcal{J}}(x,x, z)=\psi(\mathcal{J}x,x,z)=\psi(x^+-x^-,x^+ +x^-,z^+ +z^-)\\
&=\psi(x^+,x^+,z^+)-\psi(x^-,x^-,z^-)=\psi(x^+,x^+,z^+)+(-\psi(x^-,x^-,z^-))\\
&=(\sqrt{\psi(x^+,x^+,z^+)})^2+(\sqrt{-\psi(x^-,x^-,z^-)})^{2}=\mathcal{N}^{+}(x^+,z^+)^2+\mathcal{N}^{-}(x^-,z^-)^2 \\
&\le \mathcal{N}^{+}(x^+,z^+)^2+2\mathcal{N}^{+}(x^+,z^+)\mathcal{N}^{-}(x^-,z^-) +\mathcal{N}^{-}(x^-,z^-)^2 \\
&=(\mathcal{N}^{+}(x^+,z^+)+\mathcal{N}^{-}(x^-,z^-) )^2.
\end{align*}
Therefore,
$$\mathcal{N}_{\mathcal{J}}(x,z)\le \mathcal{N}^{+}( x^+ ,z^+)+\mathcal{N}^{-}( x^-,z^- ) \text{ for each } x=x^++x^-,z=z^++z^- \in\mathcal{F}$$
\end{proof}

\begin{theorem}\label{2equiv}
Let $(\mathcal{F}, [\cdot , \cdot ])$ be a Krein space with fundamental decompositions $\mathcal{F} = \mathcal{F}^{+}_{1} \dot{[+]} \mathcal{F}^{-}_{1}$, $\mathcal{F} = \mathcal{F}^{+}_{2} \dot{[+]} \mathcal{F}^{-}_{2},$
and fundamental symmetries $\mathcal{J}_{1}$ and $\mathcal{J}_{2}$ respectively. Then, the two-norms $\mathcal{N}_{\mathcal{J}_{1}}$ and $\mathcal{N}_{\mathcal{J}_{2}}$ associated to the standardized two-Krein space are equivalent.
\end{theorem}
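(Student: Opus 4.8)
The plan is to transfer the question to the level of the ordinary $\mathcal{J}$-norms, where equivalence is already guaranteed by Theorem \ref{equiv}. I would start by making both $\mathcal{J}$-two-norms explicit. Repeating the expansion used in the proof of Theorem \ref{Desigualdades de normas}, for each fundamental decomposition one finds
\[
\mathcal{N}_{\mathcal{J}}(x,z)^{2}=\psi(\mathcal{J}x,x,z)=\|x^{+}\|_{+}^{2}\,\|z^{+}\|_{+}^{2}+\|x^{-}\|_{-}^{2}\,\|z^{-}\|_{-}^{2},
\]
where the projections and the component norms are those attached to the decomposition in use. Since $\|x\|_{\mathcal{J}}^{2}=\|x^{+}\|_{+}^{2}+\|x^{-}\|_{-}^{2}$, discarding the two nonnegative cross terms of $\|x\|_{\mathcal{J}}^{2}\|z\|_{\mathcal{J}}^{2}$ gives immediately the upper estimate $\mathcal{N}_{\mathcal{J}}(x,z)\le \|x\|_{\mathcal{J}}\,\|z\|_{\mathcal{J}}$, valid for both $\mathcal{J}_{1}$ and $\mathcal{J}_{2}$.

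Combining this with Theorem \ref{equiv}, which supplies constants $m,M>0$ with $m\|\cdot\|_{\mathcal{J}_{2}}\le\|\cdot\|_{\mathcal{J}_{1}}\le M\|\cdot\|_{\mathcal{J}_{2}}$, bounds $\mathcal{N}_{\mathcal{J}_{1}}(x,z)$ by the product $M^{2}\|x\|_{\mathcal{J}_{2}}\|z\|_{\mathcal{J}_{2}}$ (and symmetrically). To close the argument one then needs to recover $\mathcal{N}_{\mathcal{J}_{2}}(x,z)$ from this product, and this is precisely the step I expect to be the main obstacle: no inequality of the form $\mathcal{N}_{\mathcal{J}}(x,z)\ge c\,\|x\|_{\mathcal{J}}\|z\|_{\mathcal{J}}$ can hold with a positive constant, because the two-norm degenerates. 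Indeed $\mathcal{N}_{\mathcal{J}}(x,z)=0$ whenever $x$ is $\mathcal{J}$-negative and $z$ is $\mathcal{J}$-positive, while $\|x\|_{\mathcal{J}}\|z\|_{\mathcal{J}}$ stays positive; since the positive/negative splitting changes with the decomposition, a vector can be null for one $\mathcal{N}_{\mathcal{J}_{i}}(\cdot,z)$ and not for the other, so a single pair of multiplicative constants relating the two two-norms pointwise cannot exist.

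To circumvent this degeneracy I would not compare the two-norms against an arbitrary third slot but test against the fundamental components: choosing $z$ purely positive isolates $\|x^{+}\|_{+}\|z^{+}\|_{+}$ and choosing $z$ purely negative isolates $\|x^{-}\|_{-}\|z^{-}\|_{-}$, so that controlling $\mathcal{N}_{\mathcal{J}}(x,\cdot)$ over all test vectors is equivalent to controlling $\|x^{+}\|_{+}$ and $\|x^{-}\|_{-}$ separately, i.e. to controlling $\|x\|_{\mathcal{J}}$. Feeding this observation through Theorem \ref{equiv} shows that a sequence is $\mathcal{N}_{\mathcal{J}_{1}}$-null against every test vector exactly when it is $\mathcal{N}_{\mathcal{J}_{2}}$-null against every test vector; this is the notion of equivalence compatible with the convergence of \cite{white}, and I would state and prove the conclusion in that form, deducing that $\mathcal{N}_{\mathcal{J}_{1}}$ and $\mathcal{N}_{\mathcal{J}_{2}}$ determine the same convergent and $h$-Cauchy sequences.
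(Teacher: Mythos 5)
Your two key claims are both correct, and together they do more than propose an alternative route: they expose a genuine defect in the paper's own proof, and indeed in the statement itself. The paper argues componentwise: it expands $\mathcal{N}_{\mathcal{J}_1}(x,z)^2=\|x^{+}\|_{\mathcal{J}_1}^{2}\|z^{+}\|_{\mathcal{J}_1}^{2}+\|x^{-}\|_{\mathcal{J}_1}^{2}\|z^{-}\|_{\mathcal{J}_1}^{2}$, where $x^{\pm},z^{\pm}$ are the components in the \emph{first} decomposition, applies Theorem \ref{equiv} to each factor, and then identifies the resulting quantity $\|x^{+}\|_{\mathcal{J}_2}^{2}\|z^{+}\|_{\mathcal{J}_2}^{2}+\|x^{-}\|_{\mathcal{J}_2}^{2}\|z^{-}\|_{\mathcal{J}_2}^{2}$ with $\mathcal{N}_{\mathcal{J}_2}(x,z)^2$. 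That final identification is unjustified, because $\mathcal{N}_{\mathcal{J}_2}(x,z)^2$ is the analogous expression built from the components of $x$ and $z$ in the \emph{second} decomposition; this is exactly the conflation your degeneracy argument predicts cannot be repaired. (There is also a minor slip: four factors each absorb $1/\alpha$, so the constant should be $1/\alpha^{4}$, not $1/\alpha^{2}$.) Your counterexample scheme is sound and can be made fully concrete in the paper's own Example \ref{R2Krein}: take the second decomposition spanned by $(\cosh s,\sinh s)$ and $(\sinh s,\cosh s)$ with $s\neq 0$, and set $x=(0,1)\in\mathcal{F}^{-}_1$, $z=(1,0)\in\mathcal{F}^{+}_1$; then $\mathcal{N}_{\mathcal{J}_1}(x,z)=0$ while $\mathcal{N}_{\mathcal{J}_2}(x,z)=\sqrt{2}\,|\sinh s\cosh s|>0$, so no inequality $\mathcal{N}_{\mathcal{J}_2}\le\beta\,\mathcal{N}_{\mathcal{J}_1}$ can hold pointwise. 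The pointwise two-sided equivalence asserted at the end of the paper's proof is therefore false whenever $\mathcal{J}_1\neq\mathcal{J}_2$.

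What survives is precisely the weaker statement you outline, and your sketch of it is essentially complete: the upper bound $\mathcal{N}_{\mathcal{J}}(x,z)\le\|x\|_{\mathcal{J}}\|z\|_{\mathcal{J}}$ (discard the cross terms) is correct, and testing the second slot against vectors of $\mathcal{F}^{+}_i$ and of $\mathcal{F}^{-}_i$ recovers $\|x^{+}\|_{+}$ and $\|x^{-}\|_{-}$, hence $\|x\|_{\mathcal{J}_i}$; feeding this through Theorem \ref{equiv} shows that $\mathcal{N}_{\mathcal{J}_1}$ and $\mathcal{N}_{\mathcal{J}_2}$ have the same convergent and $h$-Cauchy sequences, which is the topological form of equivalence appropriate to two-normed spaces. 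I would recommend stating that as the corrected theorem and recording the counterexample as a remark. Note also that Theorem \ref{varequiv} inherits the problem, since its proof invokes the pointwise inequalities; it too holds only in restricted form, e.g.\ when the test vector $t$ has nonzero components in both $\mathcal{F}^{+}_1$ and $\mathcal{F}^{-}_1$, because only then does finiteness of the $\mathcal{J}_1$ $t$-variation control $\sum_i\|f(x_i)-f(x_{i-1})\|_{\mathcal{J}_1}$ and hence, via Theorem \ref{equiv} and the upper bound above, the $\mathcal{J}_2$ $t$-variation.
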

\begin{proof}
Let $x, y \in \mathcal{F}$. By Theorem \ref{equiv}, there exist $\alpha, \beta \in \mathbb{R}^{+}$ such that
$$
\alpha\|x\|_{\mathcal{J}_{1}}\le\|x\|_{\mathcal{J}_{2}}\le\beta \|x\|_{\mathcal{J}_{1}}.
$$
In this way, we obtain on one hand that
\begin{align*}
\mathcal{N}_{\mathcal{J}_{1}}(x,z)^{2}&=\psi_{\mathcal{J}_{1}}(x,x, z)=\|x^{+}\|_{+}^{2}\left\| z^{+}\right\|^{2}_{+}+\|x^{-}\|_{-}^{2}\left\| z^{-}\right\|^{2}_{-}\\
&=\|x^{+}\|_{\mathcal{J}_{1}}^{2}\left\| z^{+}\right\|^{2}_{\mathcal{J}_{1}}+\|x^{-}\|_{\mathcal{J}_{1}}^{2}\left\| z^{-}\right\|^{2}_{\mathcal{J}_{1}}\le \frac{1}{\alpha^{2}} ( \|x^{+}\|_{\mathcal{J}_{2}}^{2}\left\| z^{+}\right\|^{2}_{\mathcal{J}_{2}}+\|x^{-}\|_{\mathcal{J}_{2}}^{2}\left\| z^{-}\right\|^{2}_{\mathcal{J}_{2}})\\
&=\frac{1}{\alpha^{2}}\mathcal{N}_{\mathcal{J}_{2}}(x,z)^{2}
\end{align*}
Similarly, it follows that
\begin{align*}
\mathcal{N}_{\mathcal{J}_{2}}(x,z)^{2}&=\psi_{\mathcal{J}_{2}}(x,x, z)=\|x^{+}\|_{+}^{2}\left\| z^{+}\right\|^{2}_{+}+\|x^{-}\|_{-}^{2}\left\| z^{-}\right\|^{2}_{-}\\
&=\|x^{+}\|_{\mathcal{J}_{2}}^{2}\left\| z^{+}\right\|^{2}_{\mathcal{J}_{2}}+\|x^{-}\|_{\mathcal{J}_{2}}^{2}\left\| z^{-}\right\|^{2}_{\mathcal{J}_{2}}\le\beta^{2}\mathcal{N}_{\mathcal{J}_{1}}(x,z)^{2}.
\end{align*}
Therefore, it holds that $\alpha \mathcal{N}_{\mathcal{J}_{1}}(x,z)\le\mathcal{N}_{\mathcal{J}_{2}}(x,z)\le \beta \mathcal{N}_{\mathcal{J}_{1}}(x,z)$.
\end{proof}

\begin{proposition}\label{completez}
Let $(\mathcal{F}=\mathcal{F}^{+}[\overset{\cdot}{+}]\mathcal{F}^{-},[\cdot,\cdot ],\mathcal{J})$ be a Krein space with the standardized two-inner product defined by $\psi(x,y, z)=[x^{+},y^{+}]\left\| z^{+}\right\|^{2}_{+}+[x^{-},y^{-}]\left\| z^{-}\right\|^{2}_{-}$ and $t\in\mathcal{F}$. Then, $(\mathcal{F}^{+},\psi)$ and $(\mathcal{F}^{-},-\psi)$ are $t$-Hilbert spaces.
\end{proposition}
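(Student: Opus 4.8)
The plan is to reduce the $t$-completeness of $(\mathcal{F}^{+},\psi)$ and $(\mathcal{F}^{-},-\psi)$ to the ordinary completeness of the component Hilbert spaces $(\mathcal{F}^{+},\|\cdot\|_{+})$ and $(\mathcal{F}^{-},\|\cdot\|_{-})$, taking advantage of the fact that the induced two-norms factor as products of Hilbert norms. First I would observe that the restriction of $\psi$ to $\mathcal{F}^{+}$ is a genuine (positive semidefinite) two-inner product: axioms $(2\text{I}_{1})$--$(2\text{I}_{4})$ are inherited from properties \mycirc{1}--\mycirc{4} of Proposition \ref{k2k}, which hold on all of $\mathcal{F}$ and hence on $\mathcal{F}^{+}$, while the positivity axiom $(2\text{I}_{5})$ now holds because for $x^{+},z^{+}\in\mathcal{F}^{+}$ the defining formula collapses to $\psi(x^{+},x^{+},z^{+})=[x^{+},x^{+}]\,\|z^{+}\|_{+}^{2}\ge 0$. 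By Remark \ref{two-normpositiveandnegative} the induced two-norm therefore factors as $\mathcal{N}^{+}(x^{+},y^{+})=\|x^{+}\|_{+}\,\|y^{+}\|_{+}$.

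The heart of the argument is completeness, and this is where the factorization does all the work. Since $\psi(x^{+},x^{+},t)=[x^{+},x^{+}]\,\|t^{+}\|_{+}^{2}$ depends on $t$ only through its component $t^{+}$, I may assume $t^{+}\neq 0$ (otherwise the reference vector is degenerate and one replaces it by any nonzero element of $\mathcal{F}^{+}$). Given a $t$-Cauchy sequence $\{x_{n}^{+}\}$ in $(\mathcal{F}^{+},\mathcal{N}^{+})$, the factorization gives
$$
\mathcal{N}^{+}(x_{n}^{+}-x_{m}^{+},t^{+})=\|x_{n}^{+}-x_{m}^{+}\|_{+}\,\|t^{+}\|_{+},
$$
so dividing by the positive constant $\|t^{+}\|_{+}$ shows that $\{x_{n}^{+}\}$ is Cauchy for the Hilbert norm $\|\cdot\|_{+}$. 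Because $(\mathcal{F}^{+},\|\cdot\|_{+})$ is a Hilbert space, there is $x^{+}\in\mathcal{F}^{+}$ with $\|x_{n}^{+}-x^{+}\|_{+}\to 0$.

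It then remains to upgrade norm convergence to two-norm convergence. For an arbitrary $u^{+}\in\mathcal{F}^{+}$ the factorization again yields $\mathcal{N}^{+}(x_{n}^{+}-x^{+},u^{+})=\|x_{n}^{+}-x^{+}\|_{+}\,\|u^{+}\|_{+}\to 0$, so $x_{n}^{+}\to x^{+}$ in $(\mathcal{F}^{+},\mathcal{N}^{+})$ and the space is complete; hence $(\mathcal{F}^{+},\psi)$ is a $t$-Hilbert space. The case of $(\mathcal{F}^{-},-\psi)$ is entirely parallel: on $\mathcal{F}^{-}$ one has $-\psi(x^{-},x^{-},z^{-})=-[x^{-},x^{-}]\,\|z^{-}\|_{-}^{2}\ge 0$ because $(\mathcal{F}^{-},-[\cdot,\cdot])$ is a Hilbert space, the induced two-norm factors as $\mathcal{N}^{-}(x^{-},y^{-})=\|x^{-}\|_{-}\,\|y^{-}\|_{-}$, and completeness transfers from $(\mathcal{F}^{-},\|\cdot\|_{-})$ in exactly the same manner.

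I do not expect any serious obstacle: the content of the statement is concentrated in the single identity $\mathcal{N}^{\pm}(x^{\pm},y^{\pm})=\|x^{\pm}\|_{\pm}\,\|y^{\pm}\|_{\pm}$, which turns $t$-Cauchyness into ordinary Cauchyness and $t$-convergence into ordinary convergence up to the positive scalar $\|t^{\pm}\|_{\pm}$. The only point requiring care is the bookkeeping around the reference vector, namely ensuring that its relevant component is nonzero so that the scalar by which one divides does not vanish; once that is noted, completeness of the two-inner product spaces is a direct consequence of the completeness built into the definition of a Krein space.
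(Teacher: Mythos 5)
Your proof follows essentially the same route as the paper's: both exploit the factorization $\mathcal{N}^{\pm}(x^{\pm},y^{\pm})=\|x^{\pm}\|_{\pm}\,\|y^{\pm}\|_{\pm}$ to convert $t$-Cauchyness into ordinary Cauchyness for $\|\cdot\|_{\pm}$, invoke the completeness of the Hilbert components built into the Krein space, and then transfer the limit back to two-norm convergence. Your version is in fact marginally more careful than the paper's, since you verify convergence against every $u^{+}$ (as the definition of convergence in a two-normed space requires) rather than only against $t^{+}$, and you at least flag the degenerate case $t^{+}=0$, which the paper silently excludes --- though note that your proposed fix of swapping in a nonzero reference vector does not actually repair that case, because when $t^{+}=0$ every sequence is $t$-Cauchy and that hypothesis does not transfer to the new reference vector.
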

\begin{proof}
Let $\{x^{+}_{n}\}_{n\in\mathbb{N}}$ be a Cauchy sequence in $(\mathcal{F}^{+},\psi)$, $t=t^{+}+t^{-}\neq 0\in\mathcal{F}$ fixed, and $\epsilon>0$ given. Take $\epsilon^{*}=\|t^{+}\|_{+}\epsilon >0$. Then, there exists $N\in\mathbb{N}$ such that for all $t^{+}\in\mathcal{F}^{+}$, it holds that $$\mathcal{N}^{+}(x^{+}_{m}-x^{+}_{n},t^{+})
<\epsilon^{*},$$ whenever $m,n>N$. Thus,
$$
\mathcal{N}^{+}(x^{+}_{m}-x^{+}_{n},t^{+})=\sqrt{\psi (x^{+}_{m}-x^{+}_{n},x^{+}_{m}-x^{+}_{n},t^{+})}=\sqrt{\|x^{+}_{m}-x^{+}_{n}\|^{2}_{+}\|t^{+}\|^{2}_{+}}<\epsilon^{*} ,
$$
hence, we have that
$$
\|x^{+}_{m}-x^{+}_{n}\|_{+}<\frac{1}{\|t^{+}\|_{+}}\epsilon^{*}=\epsilon .
$$
Consequently, $\{x^{+}_{n}\}_{n\in\mathbb{N}}$ is a Cauchy sequence in $(\mathcal{F}^{+},[\cdot,\cdot])$. Taking $\epsilon'=\frac{1}{\|t^{+}\|_{+}}\epsilon$ and by the completeness of $(\mathcal{F}^{+},[\cdot,\cdot])$, it follows that there exist $x^{+}\in\mathcal{F}^{+}$, $N'\in\mathbb{N}$, such that if $n>N'_{2}$ then $\|x^{+}_{n}-x^{+}\|< \epsilon'$.

Then,
$$
\mathcal{N}^{+}(x^{+}_{n}-x^{+},t^{+})=\sqrt{\|x^{+}_{n}-x^{+}\|^{2}_{+}\|t^{+}\|^{2}_{+}}<\epsilon^{'} \|t^{+}\|=\epsilon .
$$
In this way, $\{x_{n}\}_{n\in\mathbb{N}}$ converges to $x^{+}\in\mathcal{F}^{+}$. Therefore, $(\mathcal{F}^{+},\psi)$ is a $t$-Hilbert space.

Using an analogous reasoning, it can be seen that $(\mathcal{F}^{-},-\psi)$ is a $t$-Hilbert space.
\end{proof}

\begin{remark}
A vector space $\mathcal{F}$ endowed with a mapping $\psi\colon \mathcal{F}\times \mathcal{F}\times \mathcal{F}\longrightarrow \mathbb{C} $ that satisfies the characteristics of Proposition \ref{k2k} motivates us to give the following definition.
\end{remark}

\begin{definition}\label{two-krein}
A space with an indefinite two-inner product $(\mathcal{F},\psi )$ that admits a fundamental decomposition of the form $\mathcal{F} = \mathcal{F}^{+} \dot{[+]} \mathcal{F}^{-}, $ such that $(\mathcal{F}^{+}, \psi)$ and $(\mathcal{F}^{-}, - \psi)$ are two-Hilbert spaces will be called a \textbf{two-Krein space}.
\end{definition}

\begin{remark}
Let $(\mathcal{F}=\mathcal{F}^{+} \dot{[+]} \mathcal{F}^{-},[\cdot,\cdot],\mathcal{J} )$ be a Krein space, $[a,b]$ an interval, and \\$f:[a,b]\to \mathcal{F}=\mathcal{F}^{+}\dot{[+]} \mathcal{F}^{-}$. Taking into account that for any $t$ in $[a,b]$, $f(t)$ belongs to\\ $\mathcal{F}=\mathcal{F}^{+} \dot{[+]}\mathcal{F}^{+}$, from now on we will write the image of $t$ under $f $ as $f(t)=f^{+}(t)+f^{-}(t)$.
\end{remark}

\begin{example} \label{R2Krein}
Let us consider the set formed by pairs of complex numbers $\mathbb{C}^{2}$ endowed with the indefinite inner product $[\cdot , \cdot ] : \mathbb{C}^{2} \times \mathbb{C}^{2} \longrightarrow \mathbb{R}$ given by
\begin{eqnarray*}
[(a, b), (c, d)] := a\overline{c} - b\overline{d}, \, \, \,  (a, b), (c, d) \in \mathbb{C}^{2},
\end{eqnarray*}
which is a Krein space with fundamental symmetry $\mathcal{J}$ given by $$\mathcal{J}(a,b)=(a,0)-(0,b)=(a,-b).$$

Using \eqref{two-krein ind}, we have that the standardized indefinite two-inner product induced by this indefinite metric is given by the function $\psi\colon \mathbb{C}^{2}\times\mathbb{C}^{2}\times\mathbb{C}^{2}\longrightarrow \mathbb{C}$ defined by
\begin{align*}
\psi((x_{1},x_{2}),(y_{1},y_{2}),(z_{1},z_{2}))&=[(x_{1},0),(y_{1},0)]\|(z_{1},0)\|^{2}_{+}+[(0,x_{2}),(0,y_{2})]\|(0,z_{2})\|^{2}_{-}\\
&=x_{1}\overline{y_{1}}|z_{1}|^{2}-x_{2}\overline{y_{2}}|z_{2}|^{2}.
\end{align*}
Let
$$
\mathbb{C}^{2}{}^{+} := \left\lbrace (x, 0) | x \in \mathbb{C} \right\rbrace \quad \textup{and} \quad \mathbb{C}^{2}{}^{-} := \left\lbrace (0, y) | y \in \mathbb{C} \right\rbrace .
$$

Note that for all $(t_{1},t_{2})\in\mathbb{C}^{2}$ we have
$$
\psi((x,0),(x,0),(t_{1},t_{2}))=x\overline{x}|t_{1}|^{2}-0\cdot\overline{0}|t_{2}|^{2}=|x|^{2}|t_{1}|^{2}\geq 0
$$
and
$$
\psi((0,x),(0,x),(t_{1},t_{2}))=0\cdot\overline{0}|t_{1}|^{2}-x\overline{x}|t_{2}|^{2}=-|x|^{2}|t_{2}|^{2}\leq 0.
$$
Also,
$$
\psi((x,0),(0,y),(t_{1},t_{2}))=x\cdot\overline{0}|t_{1}|^{2}-0\cdot\overline{y}|t_{2}|^{2}=0.
$$
Furthermore, $(\mathbb{C}^{2}{}^{+},\psi)$ and $(\mathbb{C}^{2}{}^{-},-\psi)$ are two-Hilbert spaces. Therefore, $(\mathcal{F}=\mathcal{F}^{+}[\overset{\cdot}{+}]\mathcal{F}^{-},\psi)$ is a two-Krein space.

On the other hand, the $\mathcal{J}$-two-inner product is determined by the expression
\begin{align*}
\psi_{\mathcal{J}}((x_{1},x_{2}),(y_{1},y_{2}),(z_{1},z_{2}))&=\psi(\mathcal{J}(x_{1},x_{2}),(y_{1},y_{2}),(z_{1},z_{2}))=\psi((x_{1},-x_{2}),(y_{1},y_{2}),(z_{1},z_{2}))\\
&=x_{1}\overline{y_{1}}|z_{1}|^{2}+x_{2}\overline{y_{2}}|z_{2}|^{2}.
\end{align*}
\end{example}

\section{Transfer of properties from the fundamental symmetry to the fundamental two-symmetry}

\begin{proposition}\label{propiedades j}
Let $(\mathcal{F}=\mathcal{F}^{+}[\overset{\cdot}{+}]\mathcal{F}^{-},[\cdot,\cdot ],\mathcal{J})$ be a Krein space with the associated two-inner product defined by $\psi(x,y, z)=[x^{+},y^{+}]\left\| z\right\|^{2}_{+}+[x^{-},y^{-}]\left\| z\right\|^{2}_{-}$. Then, $\mathcal{J}$ preserves the following properties:
\begin{enumerate}
\item $\mathcal{J}$ is two-symmetric and two-self-adjoint.
\item $\mathcal{J}$ is two-isometric.
\item $\mathcal{J}$ is two-bounded.
\end{enumerate}
\end{proposition}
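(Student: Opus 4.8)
The plan is to verify the three assertions by direct computation on the $\pm$ components, using two elementary facts about the fundamental symmetry: for every $w = w^{+} + w^{-} \in \mathcal{F}$ one has $(\mathcal{J}w)^{+} = w^{+}$ and $(\mathcal{J}w)^{-} = -w^{-}$, and hence $\mathcal{J}^{2} = \mathcal{J}\mathcal{J}^{-1}$ is the identity. Combined with the explicit form of $\psi$ and the defining relation $\psi_{\mathcal{J}}(x,y,z) = \psi(\mathcal{J}x,y,z)$, these reduce each property to bookkeeping that respects the orthogonal decomposition $\mathcal{F} = \mathcal{F}^{+}\dot{[+]}\mathcal{F}^{-}$.

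For property (1), I first treat two-symmetry with respect to $\psi$. Substituting the component identities into the definition gives $\psi(\mathcal{J}x,y,z) = [x^{+},y^{+}]\|z^{+}\|_{+}^{2} - [x^{-},y^{-}]\|z^{-}\|_{-}^{2}$, and expanding $\psi(x,\mathcal{J}y,z)$ yields the same expression, so $\psi(\mathcal{J}x,y,z) = \psi(x,\mathcal{J}y,z)$. For two-self-adjointness I pass to $\psi_{\mathcal{J}}$ and check $\psi_{\mathcal{J}}(\mathcal{J}x,y,z) = \psi_{\mathcal{J}}(x,\mathcal{J}y,z)$: the left side equals $\psi(\mathcal{J}^{2}x,y,z) = \psi(x,y,z)$, while the right side equals $\psi(\mathcal{J}x,\mathcal{J}y,z)$, and a component computation shows $\psi(\mathcal{J}x,\mathcal{J}y,z) = [x^{+},y^{+}]\|z^{+}\|_{+}^{2} + [x^{-},y^{-}]\|z^{-}\|_{-}^{2} = \psi(x,y,z)$, so both sides coincide.

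For property (2), the cleanest route is to prove the full invariance $\psi_{\mathcal{J}}(\mathcal{J}x,\mathcal{J}y,z) = \psi_{\mathcal{J}}(x,y,z)$ and then specialize to $y = x$. Indeed $\psi_{\mathcal{J}}(\mathcal{J}x,\mathcal{J}y,z) = \psi(\mathcal{J}^{2}x,\mathcal{J}y,z) = \psi(x,\mathcal{J}y,z)$, and the computations above show $\psi(x,\mathcal{J}y,z) = \psi(\mathcal{J}x,y,z) = \psi_{\mathcal{J}}(x,y,z)$; taking $y = x$ and square roots gives $\mathcal{N}_{\mathcal{J}}(\mathcal{J}x,z) = \mathcal{N}_{\mathcal{J}}(x,z)$, i.e.\ $\mathcal{J}$ is two-isometric. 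Property (3) is then immediate: the isometry forces $\mathcal{N}_{\mathcal{J}}(\mathcal{J}x,z) = \mathcal{N}_{\mathcal{J}}(x,z) \le C\,\mathcal{N}_{\mathcal{J}}(x,z)$ with $C = 1$, so $\mathcal{J}$ is two-bounded with norm one. The only genuine obstacle is fixing the precise meaning of the four adjectives in the two-inner-product setting; once they are pinned to the identities above, every verification collapses to $(\mathcal{J}w)^{\pm} = \pm w^{\pm}$ and $\mathcal{J}^{2} = \mathrm{Id}$, and no real estimate is involved.
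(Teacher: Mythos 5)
Your proof is correct, and its computational core is the same as the paper's: both rest on the component identities $(\mathcal{J}w)^{\pm}=\pm w^{\pm}$, which give $\psi(\mathcal{J}x,y,z)=\psi(x,\mathcal{J}y,z)$ for item (1) and $\psi(\mathcal{J}x,\mathcal{J}y,z)=\psi(x,y,z)$ for item (2), exactly as in the paper. Where you genuinely diverge is in the framing of items (2)--(3). The paper states isometry and boundedness for the indefinite form $\psi$ itself and, for (3), invokes its Remark \ref{terceraj} (third-slot invariance, $\psi(x,x,\mathcal{J}t)=\psi(x,x,t)$) to prove the combined estimate $\mathcal{N}(\mathcal{J}x,t)+\mathcal{N}(x,\mathcal{J}t)\le 3\,\mathcal{N}(x,t)$, where $\mathcal{N}(x,t)=\sqrt{\psi(x,x,t)}$. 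You instead phrase everything in terms of $\psi_{\mathcal{J}}$ and the two-norm $\mathcal{N}_{\mathcal{J}}$, deduce the invariance $\psi_{\mathcal{J}}(\mathcal{J}x,\mathcal{J}y,z)=\psi_{\mathcal{J}}(x,y,z)$ from $\mathcal{J}^{2}=I$ together with (1), and read off two-boundedness with constant $1$. Your framing buys rigor: the paper's quantity $\sqrt{\psi(x,x,t)}$ need not even be real when $\psi(x,x,t)<0$, whereas $\mathcal{N}_{\mathcal{J}}$ is a genuine two-norm (Proposition \ref{j2norm}), so your isometry and boundedness statements are the ones that make sense for all $x$. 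What the paper's version captures and yours omits is the action of $\mathcal{J}$ in the second argument of the two-norm (the third slot of $\psi$), i.e.\ the term $\mathcal{N}(x,\mathcal{J}t)$; if ``two-bounded'' is meant to include that, you should append the one-line computation $\psi_{\mathcal{J}}(x,x,\mathcal{J}t)=\psi(\mathcal{J}x,x,\mathcal{J}t)=[x^{+},x^{+}]\|t^{+}\|_{+}^{2}-[x^{-},x^{-}]\|t^{-}\|_{-}^{2}=\psi_{\mathcal{J}}(x,x,t)$, the $\psi_{\mathcal{J}}$-analogue of Remark \ref{terceraj}, which follows from the same component bookkeeping you already use and yields $\mathcal{N}_{\mathcal{J}}(x,\mathcal{J}t)=\mathcal{N}_{\mathcal{J}}(x,t)$.
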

\begin{proof}
1.
\begin{align*}
\psi(\mathcal{J}x,y,z)&=\psi(x^{+}-x^{-},y,z)=[x^{+},y^{+}]\|z^{+}\|^{2}_{+}-[x^{-},y^{-}]\|z^{-}\|^{2}_{-}\\
&=[x^{+},y^{+}]\|z^{+}\|^{2}_{+}+[x^{-},-y^{-}]\|z^{-}\|^{2}_{-}=\psi(x,y^{+}-y^{-},z)=\psi(x,\mathcal{J}y,z).
\end{align*}

2. 
\begin{align*}
\psi(\mathcal{J}x,\mathcal{J}y,z)&=\psi(x^{+}-x^{-},y^{+}-y^{-},z)=[x^{+},y^{+}]\|z^{+}\|^{2}_{+}+[-x^{-},-y^{-}]\|z^{-}\|^{2}_{-}\\
&=[x^{+},y^{+}]\|z^{+}\|^{2}_{+}+[x^{-},y^{-}]\|z^{-}\|^{2}_{-}=\psi(x,y, z).
\end{align*}

3. Taking into account Remark \ref{terceraj}, it follows that
\begin{align*}
 \mathcal{N}\left(\mathcal{J} x,t\right) + \mathcal{N}(x,\mathcal{J}t)&=\sqrt{\psi( \mathcal{J}x,\mathcal{J}x ,t)}+\sqrt{\psi( x,x ,\mathcal{J}t)}\\
 &=\sqrt{\psi( x,x ,t)}+\sqrt{\psi( x,x ,t^{+}-t^{-})}\\
 &=\sqrt{\psi( x,x ,t)}+\sqrt{\psi( x,x ,t)}=2\sqrt{\psi( x,x ,t)}\\
 &\le 3\mathcal{N}(x,t).
\end{align*}
\end{proof}

\section{Variation of a function in spaces with an indefinite two-metric}

Next, we introduce the notion of a function of bounded variation in spaces with an indefinite two-metric.

\begin{definition}\label{tvar}
Let $({\mathcal{F}} =\mathcal{F}^{+} \dot{[+]} \mathcal{F}^{-},{\psi},\mathcal{J})$ be a standardized two-Krein space and $t\in \mathcal{F}$ fixed. Given $[a,b]$ a closed interval, $P=\{a=t_{0}, t_{1},\cdots b=t_{n}\}\in\mathcal{P}([a,b])$ a partition and $f\colon [a,b]\to \mathcal{F}$ a function, we define the non-negative number
$$
V_{a}^{b}(f,\mathcal{F},t;P)_{\mathcal{J}}=\sum_{i=1}^{n}  \mathcal{N}_{\mathcal{J}}(f(t_i)-f(t_{i-1}),t)
$$
as the \emph{$t$-variation of $f$ over $[a,b]$ with respect to $P$.}

Furthermore, we will call the \emph{$t$-variation of $f$ over $[a,b]$} the supremum
$$
V_a^b(f,\mathcal{F},t )_{\mathcal{J}}:=\sup \left\{ \sum_{i=1}^{n} \mathcal{N}_{\mathcal{J}}(f(t_i)-f(t_{i-1}),t)\colon P=\{x_0,x_1,\cdots,x_n\}\in\mathcal{P}[a,b]\right\}.
$$    
\end{definition}

\begin{definition}
Let $({\mathcal{F}} =\mathcal{F}^{+} \dot{[+]} \mathcal{F}^{-},{\psi},\mathcal{J})$ be a standardized two-Krein space and $t\in \mathcal{F}$ fixed. We will say that a function $f\colon [a,b]\longrightarrow \mathcal{F}$ is strongly of bounded $t$-variation over $[a,b]$ if the supremum
$$V_a^b(f,\mathcal{F},t)_{\mathcal{J}}=\sup\left\{\sum_{i=1}^n \mathcal{N}_{\mathcal{J}}(f(t_i)-f(t_{i-1}),t):P\in\mathcal{P}[a,b]\right\}$$ is finite.
\end{definition}

\begin{remark}\label{2h2k}
Note that, when $\mathcal{J}=I$, we are in the case of a two-Hilbert space, which guarantees that every function of bounded $(2,h)$-variation in a two-Hilbert space is a strongly bounded $t$-variation function in the standardized two-Krein space.
\end{remark}

Next, we show the robustness of the definition of bounded variation function in standardized two-Krein spaces. To do this, we show that this definition is independent of the fundamental decomposition.
\begin{theorem}\label{varequiv}
Let $(\mathcal{F},\psi)$ be a standardized two-Krein space with fundamental decompositions $\mathcal{F}=\mathcal{F}^{+}_{1}+\mathcal{F}^{-}_{1}$ and $\mathcal{F}=\mathcal{F}_{2}^{+}+\mathcal{F}^{-}_{2}$, and fundamental symmetries $\mathcal{J}_{1}$ and $\mathcal{J}_{2}$, respectively. If $f\colon [a,b]\longrightarrow \mathcal{F}$ is strongly of bounded $t$-variation in $(\mathcal{F},\psi,\mathcal{J}_{1})$, then it is strongly of bounded $t$-variation in $(\mathcal{F},\psi,\mathcal{J}_{2})$.
\end{theorem}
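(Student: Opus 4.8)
The plan is to reduce the statement directly to the norm-equivalence result already established in Theorem \ref{2equiv}. Since $f$ is strongly of bounded $t$-variation in $(\mathcal{F},\psi,\mathcal{J}_{1})$, by definition the supremum $V_a^b(f,\mathcal{F},t)_{\mathcal{J}_{1}}$ is a finite non-negative real number. The goal is to show the corresponding supremum $V_a^b(f,\mathcal{F},t)_{\mathcal{J}_{2}}$, formed with the two-norm $\mathcal{N}_{\mathcal{J}_{2}}$ instead of $\mathcal{N}_{\mathcal{J}_{1}}$, is likewise finite, and the natural route is to bound every Riemann-type sum built from $\mathcal{N}_{\mathcal{J}_{2}}$ by a constant multiple of the corresponding sum built from $\mathcal{N}_{\mathcal{J}_{1}}$.

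First I would invoke Theorem \ref{2equiv} to obtain constants $\alpha,\beta\in\mathbb{R}^{+}$ such that
$$
\alpha\,\mathcal{N}_{\mathcal{J}_{1}}(x,t)\le \mathcal{N}_{\mathcal{J}_{2}}(x,t)\le \beta\,\mathcal{N}_{\mathcal{J}_{1}}(x,t)\qquad\text{for all }x\in\mathcal{F},
$$
where the fixed second argument is the prescribed $t\in\mathcal{F}$. Next, for an arbitrary partition $P=\{a=t_{0},t_{1},\dots,t_{n}=b\}\in\mathcal{P}([a,b])$, I would apply the right-hand inequality with $x=f(t_{i})-f(t_{i-1})$ for each $i$ and sum over $i$, giving
$$
\sum_{i=1}^{n}\mathcal{N}_{\mathcal{J}_{2}}(f(t_{i})-f(t_{i-1}),t)\le \beta\sum_{i=1}^{n}\mathcal{N}_{\mathcal{J}_{1}}(f(t_{i})-f(t_{i-1}),t)\le \beta\,V_a^b(f,\mathcal{F},t)_{\mathcal{J}_{1}}.
$$

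Finally, since this bound holds for every partition $P$ with a constant $\beta\,V_a^b(f,\mathcal{F},t)_{\mathcal{J}_{1}}$ that does not depend on $P$, taking the supremum over all $P\in\mathcal{P}([a,b])$ yields $V_a^b(f,\mathcal{F},t)_{\mathcal{J}_{2}}\le \beta\,V_a^b(f,\mathcal{F},t)_{\mathcal{J}_{1}}<\infty$, so $f$ is strongly of bounded $t$-variation in $(\mathcal{F},\psi,\mathcal{J}_{2})$ as claimed. I do not expect any genuine obstacle here: the entire argument is a termwise comparison followed by a supremum, and the only substantive input is the equivalence of the two $\mathcal{J}$-two-norms, which is exactly the content of Theorem \ref{2equiv}. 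The one point worth stating carefully is that the equivalence constant $\beta$ is independent of both the partition and the choice of subdivision points, which is what allows the inequality to survive passage to the supremum; symmetrically, the left-hand inequality with constant $\alpha$ gives the reverse bound and shows the equivalence of the two $t$-variations, so the notion of strongly bounded $t$-variation is genuinely independent of the fundamental decomposition.
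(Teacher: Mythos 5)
Your proof is correct and takes essentially the same route as the paper: both invoke Theorem \ref{2equiv} to compare $\mathcal{N}_{\mathcal{J}_{1}}$ and $\mathcal{N}_{\mathcal{J}_{2}}$, bound the variation in $(\mathcal{F},\psi,\mathcal{J}_{2})$ by a constant multiple of the (finite) variation in $(\mathcal{F},\psi,\mathcal{J}_{1})$, and conclude finiteness. Your write-up is in fact a bit more careful than the paper's, which omits the partition-level termwise comparison and passage to the supremum, and states the final bound with the constant $\tfrac{1}{\alpha}$ where your $\beta$ (coming from $\mathcal{N}_{\mathcal{J}_{2}}\le\beta\,\mathcal{N}_{\mathcal{J}_{1}}$) is the one the argument actually delivers.
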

\begin{proof}
By Theorem \ref{2equiv}, there exist $\alpha,\beta\in\mathbb{R}^{+}$ such that
$$
\alpha \mathcal{N}_{\mathcal{J}_{1}}(x,z)\le\mathcal{N}_{\mathcal{J}_{2}}(x,z)\le\beta\mathcal{N}_{\mathcal{J}_{1}}(x,z), \quad\text{for all $x,z\in\mathcal{F}$.}
$$

Furthermore, since $f$ is strongly of bounded $t$-variation in $(\mathcal{F},\psi,\mathcal{J}_{1})$, there exists $c\in\mathbb{R}^{+}$ such that $V_{a}^{b}(f,\mathcal{F},t)_{\mathcal{J}_{1}}\le c$.

Taking $\frac{1}{\alpha}c=l>0$, we obtain
$$
V_{a}^{b}(f,\mathcal{F},t)_{\mathcal{J}_{2}}\le l.
$$
\end{proof}

\begin{example}
Consider the Krein space $(\mathbb{C}^{2},[\cdot ,\cdot],\mathcal{J})$ given in Example \ref{R2Krein}, whose standardized indefinite two-inner product $\psi$ is given by $\psi (a,b,c)=a_{1}\overline{b_{1}}|c_{1}|^{2}-a_{2}\overline{b_{2}}|c_{2}|^{2}$ for $a,b,c\in \mathbb{C}^{2}$ and the application $f\colon [a,b]\longrightarrow\mathbb{C}^{2}$ defined by $f(x)=(-xi,i)$. Take $t=(1,0)\in\mathbb{C}^{2}$. Let's see that the $(1,0)$-variation of $f$ over $[a,b]$ in $\mathbb{C}^{2}$ is finite. Indeed, let $P=\{a,\cdots x_{n-1},b\}\in\mathcal{P}([a,b])$.
First,
$$
f(x_j)-f(x_{j-1})=(-ix_{j},i)-(-ix_{j-1},i)=(-i(x_{j}-x_{j-1}),0)
$$
Then:
\begin{align*}
\sum_{j=1}^{n}  \mathcal{N}_{\mathcal{J}}( (-i(x_{j}-x_{j-1}),0), (1,0))&=\sum_{j=1}^{n}\sqrt{|-i(x_{j}-x_{j-1})|^{2} |1|^{2}}=\sum_{j=1}^{n}|(x_{j}-x_{j-1})|\\
&=\sum_{j=1}^{n}(x_{j}-x_{j-1})=b-a
\end{align*}
Thus, there exists $\alpha =b-a>0$ such that
$$
V_{a}^{b}(f,\mathbb{C}^{2},(1,0))_{\mathcal{J}}=\sup_{P\in\mathcal{P}([a,b])} \left\{ \sum_{j=1}^{n}  \mathcal{N}_{\mathcal{J}}( f(x_i)-f(x_{i-1}), (1,0))\right\}\le \alpha .
$$
Therefore, $f$ has bounded $(1,0)$-variation over $[a,b]$ in the standardized two-Krein space $(\mathbb{C}^{2},\psi,\mathcal{J})$.
\end{example}

\begin{remark}
The class of strongly bounded $t$-variation functions over $[a,b]$ in a standardized two-Krein space $({\mathcal{F}} =\mathcal{F}^{+} \dot{[+]} \mathcal{F}^{-},{\psi},\mathcal{J})$ will be denoted by $BV([a,b],(\mathcal{F},\psi_{\text{stand}}))$.
\end{remark}

\begin{theorem}
Let $({\mathcal{F}} =\mathcal{F}^{+} \dot{[+]} \mathcal{F}^{-},{\psi},\mathcal{J})$ be a standardized two-Krein space and $t=t^{+}+t^{-}\in \mathcal{F}$ fixed. If $f:[a,b]\to\mathcal{F}$ is strongly of bounded $t$-variation over $[a,b]$ in $(\mathcal{F} = \mathcal{F}^{+} \dot{[+]} \mathcal{F}^{-})$, then $f$ is strongly of bounded $t$-variation in the two-Hilbert subspaces $(\mathcal{F}^{+},\psi)$ and $(\mathcal{F}^{-},-\psi)$.
\end{theorem}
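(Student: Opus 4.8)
The plan is to reduce the entire statement to the Pythagorean-type equality established inside the proof of Theorem \ref{Desigualdades de normas}, namely
$$
\mathcal{N}_{\mathcal{J}}(x,z)^2 = \mathcal{N}^{+}(x^{+},z^{+})^2 + \mathcal{N}^{-}(x^{-},z^{-})^2,
$$
valid for every $x = x^{+}+x^{-}$ and $z = z^{+}+z^{-}$ in $\mathcal{F}$. Since both summands on the right are nonnegative, this identity immediately yields the two domination inequalities
$$
\mathcal{N}^{+}(x^{+},z^{+}) \le \mathcal{N}_{\mathcal{J}}(x,z) \quad \text{and} \quad \mathcal{N}^{-}(x^{-},z^{-}) \le \mathcal{N}_{\mathcal{J}}(x,z),
$$
because $a^2 \le a^2 + b^2$ forces $a \le \sqrt{a^2+b^2}$ whenever $a,b \ge 0$. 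These are the only inequalities the argument requires.

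Next I would fix a partition $P = \{a=t_0,\dots,t_n=b\} \in \mathcal{P}[a,b]$ and apply the first inequality to each increment, taking $x = f(t_i)-f(t_{i-1})$ and $z = t$. Because the fundamental decomposition $\mathcal{F} = \mathcal{F}^{+} \dot{[+]} \mathcal{F}^{-}$ is a linear direct sum, the projection onto $\mathcal{F}^{+}$ commutes with subtraction, so $x^{+} = f^{+}(t_i)-f^{+}(t_{i-1})$ and $z^{+} = t^{+}$. Summing over $i = 1,\dots,n$ then gives
$$
\sum_{i=1}^{n} \mathcal{N}^{+}\bigl(f^{+}(t_i)-f^{+}(t_{i-1}),\, t^{+}\bigr) \le \sum_{i=1}^{n} \mathcal{N}_{\mathcal{J}}\bigl(f(t_i)-f(t_{i-1}),\, t\bigr) \le V_a^b(f,\mathcal{F},t)_{\mathcal{J}}.
$$
As the right-hand bound is finite and independent of $P$, taking the supremum over all $P \in \mathcal{P}[a,b]$ produces $V_a^b(f^{+},\mathcal{F}^{+},t^{+}) \le V_a^b(f,\mathcal{F},t)_{\mathcal{J}} < \infty$, so the component $f^{+}$ is strongly of bounded $t^{+}$-variation in $(\mathcal{F}^{+},\psi)$. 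The argument for $f^{-}$ is verbatim the same, using the second domination inequality together with the two-norm $\mathcal{N}^{-}$ on $(\mathcal{F}^{-},-\psi)$.

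The computation is elementary, so I expect no serious obstacle; the single point requiring care is invoking the Pythagorean \emph{equality} above rather than the weaker triangle-type bound $\mathcal{N}_{\mathcal{J}}(x,z) \le \mathcal{N}^{+}(x^{+},z^{+}) + \mathcal{N}^{-}(x^{-},z^{-})$ that forms the final line of Theorem \ref{Desigualdades de normas}, since that bound points the inequality in the wrong direction for this result. One must instead read off the intermediate line of that proof, where $\mathcal{N}_{\mathcal{J}}(x,z)^2$ is shown to equal $\mathcal{N}^{+}(x^{+},z^{+})^2 + \mathcal{N}^{-}(x^{-},z^{-})^2$ before the cross term is introduced. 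The remaining subtleties are purely bookkeeping: confirming that the positive and negative parts of each increment coincide with the increments of $f^{+}$ and $f^{-}$, and that the correct second slot for the component two-norms is $t^{+}$ (respectively $t^{-}$), which is forced by the choice $z = t$ with $z^{+} = t^{+}$ and $z^{-} = t^{-}$.
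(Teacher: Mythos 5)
Your proposal is correct and follows essentially the same route as the paper: both arguments rest on the Pythagorean identity $\mathcal{N}_{\mathcal{J}}(x,t)^{2}=\mathcal{N}^{+}(x^{+},t^{+})^{2}+\mathcal{N}^{-}(x^{-},t^{-})^{2}$ (which the paper invokes via Remark \ref{two-normpositiveandnegative} and the expansion of $\psi_{\mathcal{J}}$, and you read off from the intermediate line of Theorem \ref{Desigualdades de normas}), deduce the componentwise domination $\mathcal{N}^{\pm}\le\mathcal{N}_{\mathcal{J}}$, sum over an arbitrary partition, and pass to the supremum. Your observation that the second slot should be $t^{+}$ (resp.\ $t^{-}$) is a point on which you are actually more careful than the paper's own write-up, which loosely writes $t$ there.
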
 
\begin{proof}
Let $P=\{a,x_1\cdots x_{n-1},b\}\in\mathcal{P}([a,b])$ be a partition. Since $f:[a,b]\to\mathcal{F}$ is strongly of bounded $t$-variation over $[a,b]$ in $(\mathcal{F} = \mathcal{F}^{+} \dot{[+]} \mathcal{F}^{-},\psi,\mathcal{J})$, then there exists $\alpha\in\mathbb{R}^{+}$ such that
$$
V_a^b(f,\mathcal{F},t )_{\mathcal{J}}:=\sup_{P\in\mathcal{P}([a,b])} \left\{ \sum_{i=1}^{n}  \mathcal{N}_{\mathcal{J}}(f(x_i)-f(x_{i-1}), t )\right\}\le \alpha .
$$
Furthermore,
\begin{align*}
\mathcal{N}_{\mathcal{J}}(f(x_i)-f(x_{i-1}), t )&=\sqrt{\|f^{+}(x_i)-f^{+}(x_{i-1})\|^{2}_{+}\left\| t^{+}\right\|^{2}_{+}+\|f^{-}(x_i)-f^{-}(x_{i-1})\|^{2}_{-}\left\| t^{-}\right\|^{2}_{-}}.
\end{align*}
Note that by Remark \ref{two-normpositiveandnegative} it holds that
$$
\mathcal{N}_{\mathcal{J}}(f(x_i)-f(x_{i-1}), t )\ge \|f^{+}(x_i)-f^{+}(x_{i-1})\|_{+}\left\| t^{+}\right\|_{+},\|f^{-}(x_i)-f^{-}(x_{i-1})\|_{-}\left\| t^{-}\right\|_{-}.
$$
That is,
$$
 \mathcal{N}^{+}( f^{+}(x_i)-f^{+}(x_{i-1}), t), \mathcal{N}^{-}( f^{-}(x_i)-f^{-}(x_{i-1}), t)\le  \mathcal{N}_{\mathcal{J}}(f(t_i)-f(t_{i-1}),t).
$$
Therefore:
$$
\overset{+}{V^{b}_{a}}(f,\mathcal{F},t )_{\mathcal{J}}:=\sup_{P\in\mathcal{P}([a,b])} \left\{ \sum_{i=1}^{n}  \mathcal{N}^{+}( f^{+}(x_i)-f^{+}(x_{i-1}), t)\right\}\le \alpha
$$
and
$$
\overset{-}{V^{b}_{a}}(f,\mathcal{F},t )_{\mathcal{J}}:=\sup_{P\in\mathcal{P}([a,b])} \left\{ \sum_{i=1}^{n}  \mathcal{N}^{-}( f^{-}(x_i)-f^{-}(x_{i-1}), t)\right\}\le \alpha .
$$
\end{proof}

Next, we show that the $t$-variation in a space with an indefinite two-metric \ref{tvar} extends the notion presented in Definition \ref{fuertemente de variacion acotada}.
\begin{proposition}\label{va implica 2kva}
Let $({\mathcal{F}} =\mathcal{F}^{+} \dot{[+]} \mathcal{F}^{-},{[\cdot , \cdot ]},\mathcal{J})$ be a Krein space and $t\in\mathcal{F}$. If
$f\colon [a,b]\to \mathcal{F}$ is a strongly bounded variation function in $\left(\mathcal{F},[ \cdot,\cdot ]\right)$, then $f$ is strongly bounded $t$-variation in the standardized two-Krein space induced by $[\cdot,\cdot]$.
\end{proposition}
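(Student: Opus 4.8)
The plan is to bound the $t$-variation of $f$ in the standardized two-Krein space directly by the strong variation of $f$ in the Krein space, partition by partition, and then pass to the supremum. First I would fix $t = t^{+} + t^{-} \in \mathcal{F}$ and take an arbitrary partition $P = \{a = t_0, t_1, \dots, t_n = b\} \in \mathcal{P}([a,b])$. For each $i$ I abbreviate $\Delta_i = f(t_i) - f(t_{i-1})$, so that its components are $\Delta_i^{+} = f^{+}(t_i) - f^{+}(t_{i-1}) \in \mathcal{F}^{+}$ and $\Delta_i^{-} = f^{-}(t_i) - f^{-}(t_{i-1}) \in \mathcal{F}^{-}$.

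The central step is Theorem \ref{Desigualdades de normas}, which gives $\mathcal{N}_{\mathcal{J}}(\Delta_i, t) \le \mathcal{N}^{+}(\Delta_i^{+}, t^{+}) + \mathcal{N}^{-}(\Delta_i^{-}, t^{-})$. By Remark \ref{two-normpositiveandnegative}, the right-hand side equals $\|\Delta_i^{+}\|_{+}\|t^{+}\|_{+} + \|\Delta_i^{-}\|_{-}\|t^{-}\|_{-}$. Setting $M = \max\{\|t^{+}\|_{+}, \|t^{-}\|_{-}\}$, a constant depending only on the fixed $t$ and not on $P$, this is at most $M\bigl(\|\Delta_i^{+}\|_{+} + \|\Delta_i^{-}\|_{-}\bigr)$. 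Summing over $i$ and recognizing the resulting sum as a partition sum for the strong Krein variation of Definition \ref{fuertemente de variacion acotada},
\[
\sum_{i=1}^{n} \mathcal{N}_{\mathcal{J}}(\Delta_i, t) \le M \sum_{i=1}^{n}\bigl(\|\Delta_i^{+}\|_{+} + \|\Delta_i^{-}\|_{-}\bigr) \le M\, V_a^b\bigl(f, (\mathcal{F}, [\cdot,\cdot])\bigr).
\]

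Since the bound $M\, V_a^b(f, (\mathcal{F}, [\cdot,\cdot]))$ is independent of $P$, taking the supremum over all partitions yields $V_a^b(f, \mathcal{F}, t)_{\mathcal{J}} \le M\, V_a^b(f, (\mathcal{F}, [\cdot,\cdot]))$, which is finite by hypothesis; hence $f$ is strongly of bounded $t$-variation in the standardized two-Krein space. I do not expect a serious obstacle: the argument is a clean chaining of the triangle-type estimate of Theorem \ref{Desigualdades de normas} with the factorized form of the component two-norms. The only point requiring care is that the comparison constant $M$ be chosen uniformly in $P$ so that the supremum transfers, and this is automatic because $t$ is fixed before the partition is selected.
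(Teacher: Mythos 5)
Your proof is correct and follows essentially the same route as the paper's: both apply Theorem \ref{Desigualdades de normas} together with the factorized component two-norms of Remark \ref{two-normpositiveandnegative}, pull out a constant depending only on the fixed $t$, recognize the remaining partition sum as a partition sum for the strong Krein variation, and pass to the supremum. The only (immaterial) difference is your choice of constant $M=\max\{\|t^{+}\|_{+},\|t^{-}\|_{-}\}$ where the paper uses $\|t^{+}\|_{+}+\|t^{-}\|_{-}$.
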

\begin{proof}
Let $P=\{a,x_1,\cdots,x_{n-1},b\}\in\mathcal{P}[a,b]$. Since $f$ is strongly of bounded variation in the classical Krein space $\left(\mathcal{F},[ \cdot,\cdot ]\right)$, there exists $\alpha\in \mathbb{R}^+$ such that
$$
V_a^b(f,\mathcal{F})=\sup\left\{\sum_{i=1}^n(\|f^{+}(t_i)-f^{+}(t_{i-1})\|_{+}+\|f^{-}(t_i)-f^{-}(t_{i-1})\|_{-})\right\}\le \alpha.
$$
Using Theorem \ref{Desigualdades de normas}, it follows that
$$
\mathcal{N}_{\mathcal{J}}(f(x_i)-f(x_{i-1}), t )\le \|f^{+}(x_i)-f^{+}(x_{i-1})\|_{+}\left\| t^{+}\right\|_{+}+\|f^{-}(x_i)-f^{-}(x_{i-1})\|_{-}\left\| t^{-}\right\|_{-}.
$$
Furthermore,
\begin{align*}
\mathcal{N}_{\mathcal{J}}(f(x_i)-f(x_{i-1}), t )&\le \|f^{+}(x_i)-f^{+}(x_{i-1})\|_{+}(\left\| t^{+}\right\|_{+}+\left\| t^{-}\right\|_{-})\\
&\ \ \ +\|f^{-}(x_i)-f^{-}(x_{i-1})\|_{-}(\left\| t^{+}\right\|_{+}+\left\| t^{-}\right\|_{-})\\
&=(\left\| t^{+}\right\|_{+}+\left\| t^{-}\right\|_{-})(\|f^{+}(x_i)-f^{+}(x_{i-1})\|_{+}+\|f^{-}(x_i)-f^{-}(x_{i-1})\|_{-})
\end{align*}
Thus, there exists $\beta =(\left\| t^{+}\right\|_{+}+\left\| t^{-}\right\|_{-})\alpha\ge 0$ such that it holds that
$$
V_{a}^{b}(f,\mathcal{F},t)_{\mathcal{J}}=\sup_{P\in\mathcal{P}([a,b])} \left\{ \sum_{i=1}^{n}  \mathcal{N}_{\mathcal{J}}(f(x_i)-f(x_{i-1}), t )\right\}\leq (\left\| t^{+}\right\|_{+}+\left\| t^{-}\right\|_{-})V_a^b(f,\mathcal{F})\leq \beta
$$
Therefore, $f$ is of bounded $t$-variation over $[a,b]$ in the standardized two-Krein space generated by $({\mathcal{F}} =\mathcal{F}^{+} \dot{[+]} \mathcal{F}^{-},{[\cdot , \cdot ]},\mathcal{J})$.
\end{proof}

\begin{proposition}\label{f cero constante}
Let $(\mathcal{F},\psi_{\text{stand}},\mathcal{J})$ be a standardized two-Krein space, $t\in \mathcal{F}$, and $f\colon [a,b]\to \mathcal{F}$ a strongly bounded $t$-variation function over $[a,b]$. If $V_{a}^{b}(f,\mathcal{F},t)_{\mathcal{J}}=0$, then for all $x$ in $[a,b]$ it holds that $\mathcal{N}_{\mathcal{J}}(f(x),t)=\mathcal{N}_{\mathcal{J}}(f(a),t)=\mathcal{N}_{\mathcal{J}}(f(b),t)$.
\end{proposition}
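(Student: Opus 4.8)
The plan is to exploit the fact that the $t$-variation is a supremum of sums of non-negative terms, so that its vanishing forces every individual summand to vanish over every partition, and then to combine this with a reverse triangle inequality for the two-norm $\mathcal{N}_{\mathcal{J}}$.

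First I would fix an arbitrary $x \in (a,b)$ and consider the partition $P = \{a, x, b\}$. Since each two-norm term is non-negative and the sum over $P$ is bounded above by the supremum $V_a^b(f,\mathcal{F},t)_{\mathcal{J}} = 0$, the inequality
$$
\mathcal{N}_{\mathcal{J}}(f(x)-f(a),t) + \mathcal{N}_{\mathcal{J}}(f(b)-f(x),t) \le V_a^b(f,\mathcal{F},t)_{\mathcal{J}} = 0
$$
forces both summands to be zero; in particular $\mathcal{N}_{\mathcal{J}}(f(x)-f(a),t) = 0$.

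Next I would establish the reverse triangle inequality for $\mathcal{N}_{\mathcal{J}}$. Using the symmetry $(2N_3)$ to transfer the subadditivity $(2N_2)$ to the first argument, and the homogeneity $(2N_1)$ (which gives $\mathcal{N}_{\mathcal{J}}(-u,t) = \mathcal{N}_{\mathcal{J}}(u,t)$), the two estimates $\mathcal{N}_{\mathcal{J}}(f(x),t) \le \mathcal{N}_{\mathcal{J}}(f(x)-f(a),t) + \mathcal{N}_{\mathcal{J}}(f(a),t)$ and $\mathcal{N}_{\mathcal{J}}(f(a),t) \le \mathcal{N}_{\mathcal{J}}(f(a)-f(x),t) + \mathcal{N}_{\mathcal{J}}(f(x),t)$ combine to yield
$$
\bigl|\, \mathcal{N}_{\mathcal{J}}(f(x),t) - \mathcal{N}_{\mathcal{J}}(f(a),t) \,\bigr| \le \mathcal{N}_{\mathcal{J}}(f(x)-f(a),t).
$$
Feeding in the vanishing of the right-hand side from the first step gives $\mathcal{N}_{\mathcal{J}}(f(x),t) = \mathcal{N}_{\mathcal{J}}(f(a),t)$.

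Finally I would dispatch the endpoints: the case $x = a$ is immediate, and for $x = b$ the partition $\{a,b\}$ gives $\mathcal{N}_{\mathcal{J}}(f(b)-f(a),t) = 0$, whence $\mathcal{N}_{\mathcal{J}}(f(b),t) = \mathcal{N}_{\mathcal{J}}(f(a),t)$ by the same reverse triangle inequality. This establishes $\mathcal{N}_{\mathcal{J}}(f(x),t) = \mathcal{N}_{\mathcal{J}}(f(a),t) = \mathcal{N}_{\mathcal{J}}(f(b),t)$ for all $x \in [a,b]$. There is no genuine obstacle in this argument; the only point demanding a little care is the reverse triangle inequality, since the two-norm axioms are stated for the second slot and must be applied in the first slot through the symmetry $(2N_3)$.
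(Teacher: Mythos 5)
Your proposal is correct and follows essentially the same route as the paper: the partition $\{a,x,b\}$, the vanishing of each non-negative summand, and the reverse triangle inequality for $\mathcal{N}_{\mathcal{J}}$ in the first slot. In fact you are slightly more careful than the paper, which uses the reverse triangle inequality without noting that the two-norm axioms, stated for the second argument, must be transferred to the first argument via the symmetry $(2N_3)$.
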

\begin{proof}
Suppose that $V_a^b(f,\mathcal{F},t)_{\mathcal{J}}=0$, that is, $$V_a^b(f,\mathcal{F},t)_{\mathcal{J}}=\sup\Bigg\{\sum_{i=1}^{n}\mathcal{N}_{\mathcal{J}}(f(x_i)-f(x_{i-1}), t ):P\in\mathcal{P}[a,b]\Bigg\}=0.$$
Then, for any partition of $[a,b]$, it holds that
\begin{equation}\label{hihi}
\sum_{i=1}^{n}\mathcal{N}_{\mathcal{J}}(f(x_i)-f(x_{i-1}), t )=0.  
\end{equation}
Let $x\in(a,b)$. Consider in particular the partition $P=\{a,x,b\}$ of $[a,b]$. By (\ref{hihi}), it holds that
$$
\mathcal{N}_{\mathcal{J}}( f(x)-f(a), t)+\mathcal{N}_{\mathcal{J}}( f(b)-f(x), t)=0.
$$
Thus,
$$
\big|\mathcal{N}_{\mathcal{J}}( f(x), t)-\mathcal{N}_{\mathcal{J}}( f(a), t)\big|\le \mathcal{N}_{\mathcal{J}}( f(x)-f(a), t)=0
$$
and
$$
\big|\mathcal{N}_{\mathcal{J}}( f(b), t)-\mathcal{N}_{\mathcal{J}}( f(x), t) \big|\le\mathcal{N}_{\mathcal{J}}( f(b)-f(x), t)=0.
$$
From this we obtain that
$$\mathcal{N}_{\mathcal{J}}(f(x),t)=\mathcal{N}_{\mathcal{J}}(f(a),t)=\mathcal{N}_{\mathcal{J}}(f(b),t)\quad\text{for all }x\in [a,b].$$
\end{proof}

The following result shows that $BV([a,b],(\mathcal{F},\psi_{\text{stand}}))$ is a subset of the $(2,t)$-bounded functions.
\begin{theorem}
Let $(\mathcal{F},\psi_{\text{stand}},\mathcal{J})$ be a standardized two-Krein space and $t\in \mathcal{F}$.\\ If $f\colon [a,b]\to \mathcal{F}$ is a strongly bounded $t$-variation function, then $f$ is $(2,t)$-bounded.
\end{theorem}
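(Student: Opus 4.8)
The plan is to show that $\sup_{x\in[a,b]}\mathcal{N}_{\mathcal{J}}(f(x),t)<\infty$, which is exactly the assertion that $f$ is $(2,t)$-bounded. The idea is to anchor every value $f(x)$ to the fixed endpoint value $f(a)$ and then control the increment $f(x)-f(a)$ by the (finite) total $t$-variation. Since $f$ is strongly of bounded $t$-variation, I would first set $M:=V_a^b(f,\mathcal{F},t)_{\mathcal{J}}<\infty$.

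Next I would fix an arbitrary $x\in(a,b)$ and apply the triangle inequality $(2N_2)$ for the $\mathcal{J}$-two-norm (valid by Proposition \ref{j2norm}), writing
$$
\mathcal{N}_{\mathcal{J}}(f(x),t)\le \mathcal{N}_{\mathcal{J}}(f(x)-f(a),t)+\mathcal{N}_{\mathcal{J}}(f(a),t).
$$
To bound the first term I would use the particular partition $P=\{a,x,b\}\in\mathcal{P}[a,b]$, for which
$$
\mathcal{N}_{\mathcal{J}}(f(x)-f(a),t)\le \mathcal{N}_{\mathcal{J}}(f(x)-f(a),t)+\mathcal{N}_{\mathcal{J}}(f(b)-f(x),t)=V_a^b(f,\mathcal{F},t;P)_{\mathcal{J}}\le M,
$$
the last inequality because the $t$-variation associated to any single partition is dominated by the supremum $V_a^b(f,\mathcal{F},t)_{\mathcal{J}}=M$.

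Combining the two displays yields $\mathcal{N}_{\mathcal{J}}(f(x),t)\le M+\mathcal{N}_{\mathcal{J}}(f(a),t)$ for every $x\in(a,b)$; the endpoints $x=a$ and $x=b$ satisfy the same bound directly (for $x=b$ take the partition $P=\{a,b\}$, and for $x=a$ the bound is trivial). Hence $\sup_{x\in[a,b]}\mathcal{N}_{\mathcal{J}}(f(x),t)\le M+\mathcal{N}_{\mathcal{J}}(f(a),t)<\infty$, establishing $(2,t)$-boundedness. I do not expect a genuine obstacle here: the argument is a direct transcription of the classical ``bounded variation implies bounded'' estimate into the $\mathcal{J}$-two-norm setting. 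The only point requiring minor care is purely notational, namely ensuring that every admissible partition contains both $a$ and $b$ so that the comparison partition $\{a,x,b\}$ is legitimate, and treating the endpoint values $f(a),f(b)$ separately so the uniform bound covers all of $[a,b]$.
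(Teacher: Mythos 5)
Your proof is correct, but it takes a genuinely different route from the paper. The paper disposes of this theorem in one line: since $\mathcal{N}_{\mathcal{J}}(x,y)=\sqrt{\psi_{\mathcal{J}}(x,x,y)}$ is a two-norm (Proposition \ref{j2norm}), the statement is an instance of the corresponding result for functions of bounded $(2,h)$-variation in two-normed spaces, and the proof is delegated entirely to the citation \cite{FCF}. You instead reconstruct the underlying classical ``bounded variation implies bounded'' argument directly in the two-Krein setting: anchoring $f(x)$ to $f(a)$ via the triangle inequality and dominating $\mathcal{N}_{\mathcal{J}}(f(x)-f(a),t)$ by the variation over the comparison partition $\{a,x,b\}$, which is admissible since partitions in Definition \ref{tvar} contain both endpoints. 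The two approaches buy different things: the paper's reduction is shorter and emphasizes the structural point that a standardized two-Krein space is in particular a two-normed space, so the $(2,t)$-results of \cite{FCF} transfer wholesale; your argument is self-contained, spares the reader an external reference, and produces the explicit uniform bound $\sup_{x\in[a,b]}\mathcal{N}_{\mathcal{J}}(f(x),t)\le V_a^b(f,\mathcal{F},t)_{\mathcal{J}}+\mathcal{N}_{\mathcal{J}}(f(a),t)$. One small point of care: the inequality $\mathcal{N}_{\mathcal{J}}(f(x),t)\le\mathcal{N}_{\mathcal{J}}(f(x)-f(a),t)+\mathcal{N}_{\mathcal{J}}(f(a),t)$ applies the triangle inequality in the \emph{first} argument, whereas $(2N_2)$ is stated for the second; this is legitimate only after combining $(2N_2)$ with the symmetry $(2N_3)$, both of which Proposition \ref{j2norm} supplies, and it would be worth saying so explicitly.
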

\begin{proof}
Taking into account that the application $\mathcal{N}_{\mathcal{J}}\colon \mathcal{F}\times\mathcal{F}\longrightarrow [0,\infty )$ defined by $$\mathcal{N}_{\mathcal{J}}  (x,y)=\sqrt{\psi_{\mathcal{J}}(x,x, y)} $$ is a two-norm, the proof follows from \cite{FCF}.
\end{proof}

\begin{theorem}\label{desigualdades}
Let $f,g\in BV([a,b],(\mathcal{F},\psi_{\text{stand}}))$ and $\alpha\in\mathbb{C}$. Then, the following propositions hold:
\begin{enumerate}
\item Homogeneity of variation: $V_{a}^{b}(\alpha f,\mathcal{F},t)_{\mathcal{J}}=|\alpha | V_{a}^{b}( f,\mathcal{F},t)_{\mathcal{J}}$.
\item Subadditivity of variation: $V_{a}^{b}(f+g,\mathcal{F},t)_{\mathcal{J}}\le V_{a}^{b}( f,\mathcal{F},t)_{\mathcal{J}}+V_{a}^{b}( g,\mathcal{F},t)_{\mathcal{J}}\quad$ and $\quad V_{a}^{b}(f,\mathcal{F},t+v)_{\mathcal{J}}\le V_{a}^{b}( f,\mathcal{F},t)_{\mathcal{J}}+V_{a}^{b}( f,\mathcal{F},v)_{\mathcal{J}}$.
\item Decomposition of variation: $V_{a}^{b}(f,\mathcal{F},t)_{\mathcal{J}}\le \overset{+}{V}{}_{a}^{b}( f^{+},\mathcal{F}^{+},t^{+})_{\mathcal{J}} + \overset{-}{V}{}_{a}^{b}( f^{-},\mathcal{F}^{-},t^{-})_{\mathcal{J}}$
\end{enumerate}
\end{theorem}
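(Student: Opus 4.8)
The plan is to prove all three statements by the same two-step template: fix an arbitrary partition $P=\{a=x_0,\dots,x_n=b\}\in\mathcal{P}[a,b]$, establish the desired inequality (or identity) for the corresponding finite sum by applying the two-norm axioms of $\mathcal{N}_{\mathcal{J}}$ (Proposition \ref{j2norm}) term by term, and then pass to the supremum over $P$. The only genuine subtleties are that the triangle inequality $(2N_2)$ and the homogeneity $(2N_1)$ are stated for the \emph{second} slot, so to act on the increments $f(x_i)-f(x_{i-1})$ (which sit in the \emph{first} slot) I will invoke symmetry $(2N_3)$ to transfer these properties to the first argument; and that passing to the supremum of a sum requires the inequality $\sup(A+B)\le \sup A+\sup B$.

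For \textbf{homogeneity} (1), I would note that $(\alpha f)(x_i)-(\alpha f)(x_{i-1})=\alpha\bigl(f(x_i)-f(x_{i-1})\bigr)$, so by $(2N_3)$ followed by $(2N_1)$ each summand equals $|\alpha|\,\mathcal{N}_{\mathcal{J}}(f(x_i)-f(x_{i-1}),t)$. Since $|\alpha|$ is a fixed nonnegative constant it factors out of the sum and out of the supremum, giving the exact identity (the case $\alpha=0$ being trivial). For the \textbf{subadditivity} statements (2), the first inequality uses $(f+g)(x_i)-(f+g)(x_{i-1})=\bigl(f(x_i)-f(x_{i-1})\bigr)+\bigl(g(x_i)-g(x_{i-1})\bigr)$ together with $(2N_3)$ and $(2N_2)$ to split each summand, after which I bound $\sum_i\mathcal{N}_{\mathcal{J}}(f(x_i)-f(x_{i-1}),t)\le V_a^b(f,\mathcal{F},t)_{\mathcal{J}}$ and likewise for $g$, then take the supremum on the left. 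The second inequality is even more direct: applying $(2N_2)$ in the second slot to $t+v$ gives $\mathcal{N}_{\mathcal{J}}(f(x_i)-f(x_{i-1}),t+v)\le \mathcal{N}_{\mathcal{J}}(f(x_i)-f(x_{i-1}),t)+\mathcal{N}_{\mathcal{J}}(f(x_i)-f(x_{i-1}),v)$ termwise, and I sum and take the supremum as before.

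For the \textbf{decomposition} (3), the key tool is Theorem \ref{Desigualdades de normas}. Applying it to $x=f(x_i)-f(x_{i-1})$ and $z=t$, and using that the fundamental decomposition is linear so that $x^{+}=f^{+}(x_i)-f^{+}(x_{i-1})$, $x^{-}=f^{-}(x_i)-f^{-}(x_{i-1})$ while $z^{\pm}=t^{\pm}$, I obtain
\[
\mathcal{N}_{\mathcal{J}}(f(x_i)-f(x_{i-1}),t)\le \mathcal{N}^{+}\bigl(f^{+}(x_i)-f^{+}(x_{i-1}),t^{+}\bigr)+\mathcal{N}^{-}\bigl(f^{-}(x_i)-f^{-}(x_{i-1}),t^{-}\bigr)
\]
for each $i$. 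Summing and bounding each of the two resulting sums by $\overset{+}{V}{}_{a}^{b}(f^{+},\mathcal{F}^{+},t^{+})_{\mathcal{J}}$ and $\overset{-}{V}{}_{a}^{b}(f^{-},\mathcal{F}^{-},t^{-})_{\mathcal{J}}$ respectively, then taking the supremum over $P$, yields the claim. None of the three parts presents a serious analytic obstacle; the main point requiring care is the disciplined use of symmetry $(2N_3)$ to move the norm axioms onto the correct argument, and the observation that the finiteness of each variation (guaranteed by $f,g\in BV([a,b],(\mathcal{F},\psi_{\text{stand}}))$) lets every supremum on the right-hand sides serve as a legitimate finite upper bound.
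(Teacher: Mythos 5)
Your proposal is correct and follows essentially the same route as the paper: termwise application of the two-norm axioms (homogeneity, triangle inequality in the appropriate slot, and Theorem \ref{Desigualdades de normas} for the decomposition), followed by passing to the supremum over partitions. If anything, your treatment is slightly more careful than the paper's, which writes the step ``sum of suprema equals supremum of sums'' as an equality where only the inequality $\sup(A+B)\le\sup A+\sup B$ holds (and suffices), and which leaves the transfer of the axioms between the two slots via $(2N_3)$ implicit.
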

\begin{proof}
1.
\begin{align*}
V_{a}^{b}(\alpha f,\mathcal{F},t)_{\mathcal{J}}&=\sup_{P\in\mathcal{P}[a,b]}\left\{ \sum_{i=1}^{n}\mathcal{N}_{\mathcal{J}}(\alpha f(x_{i})-\alpha f(x_{i-1}),t) \right\}\\
&=|\alpha|\sup_{P\in\mathcal{P}[a,b]}\left\{ \sum_{i=1}^{n}\mathcal{N}_{\mathcal{J}}( f(x_{i})- f(x_{i-1}),t) \right\}=|\alpha | V_{a}^{b}( f,\mathcal{F},t)_{\mathcal{J}}
\end{align*}

2.
\begin{align*}
V_a^b(f,\mathcal{F},t)_{\mathcal{J}}+V_a^b(g,\mathcal{F},t)_{\mathcal{J}}&=\sup\left\{ \sum_{i=1}^{n}\mathcal{N}_{\mathcal{J}}(f(t_i)-f(t_{i-1}),t)+\mathcal{N}_{\mathcal{J}}(g(t_i)-g(t_{i-1}),t)\right\}\\
&\ge\sup\left\{ \sum_{i=1}^{n}\mathcal{N}_{\mathcal{J}}((f(t_i)+g(t_i))-(f(t_{i-1})+g(t_{i-1})),t)\right\}\\
&=V_a^b(f+g,\mathcal{F},t).
\end{align*}
Analogously, the second inequality holds.

3.
\begin{align*}
\overset{+}{V}{}_{a}^{b}( f^{+},\mathcal{F},t^{+})_{\mathcal{J}} + \overset{-}{V}{}_{a}^{b}( f^{-},\mathcal{F},t^{-})_{\mathcal{J}}&=\sup_{P\in\mathcal{P}[a,b]}\Bigg\{ \sum_{i=1}^{n}\big[\mathcal{N}^{+}(f^{+}(x_{i})- f^{+}(x_{i-1}),t^{+}) \\
& \quad\quad\quad\quad\quad+\mathcal{N}^{-}( f^{-}(x_{i})- f^{-}(x_{i-1}),t^{-})\big] \Bigg\}\\
&\ge \sup_{P\in\mathcal{P}[a,b]}\left\{ \sum_{i=1}^{n}\mathcal{N}_{\mathcal{J}}(f(t_i)-f(t_{i-1}),t)\right\}=V_{a}^{b}( f,\mathcal{F},t)_{\mathcal{J}} 
\end{align*}
\end{proof}

\begin{proposition}
$BV([a,b],(\mathcal{F},\psi_{\text{stand}}))$ is a vector space.
\end{proposition}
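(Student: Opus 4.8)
The plan is to exhibit $BV([a,b],(\mathcal{F},\psi_{\text{stand}}))$ as a linear subspace of the ambient vector space of all functions $f\colon[a,b]\to\mathcal{F}$, with the usual pointwise operations $(f+g)(x)=f(x)+g(x)$ and $(\alpha f)(x)=\alpha f(x)$. Since this ambient space is already a complex vector space, it suffices to check three things: that the zero function belongs to the class, that the class is closed under scalar multiplication, and that it is closed under addition. All three follow almost immediately from the structural properties recorded in Theorem \ref{desigualdades}.

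First I would dispatch the zero function $f\equiv 0$. For any partition $P=\{a=x_0,\dots,x_n=b\}$, every difference $f(x_i)-f(x_{i-1})$ is $\mathbf{0}$, so $\mathcal{N}_{\mathcal{J}}(\mathbf{0},t)=0$ by property $(2N_1)$ of Proposition \ref{j2norm}, whence $V_a^b(0,\mathcal{F},t)_{\mathcal{J}}=0<\infty$ and $0\in BV([a,b],(\mathcal{F},\psi_{\text{stand}}))$. Next, for closure under scalar multiplication, I would take $f\in BV([a,b],(\mathcal{F},\psi_{\text{stand}}))$ and $\alpha\in\mathbb{C}$; by the homogeneity of variation (part 1 of Theorem \ref{desigualdades}) we have $V_a^b(\alpha f,\mathcal{F},t)_{\mathcal{J}}=|\alpha|\,V_a^b(f,\mathcal{F},t)_{\mathcal{J}}$, and since the right-hand side is a finite real number multiplied by $|\alpha|$, it remains finite, so $\alpha f$ lies in the class.

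For closure under addition I would take $f,g\in BV([a,b],(\mathcal{F},\psi_{\text{stand}}))$ and invoke the subadditivity of variation (part 2 of Theorem \ref{desigualdades}), which gives
$$
V_a^b(f+g,\mathcal{F},t)_{\mathcal{J}}\le V_a^b(f,\mathcal{F},t)_{\mathcal{J}}+V_a^b(g,\mathcal{F},t)_{\mathcal{J}}.
$$
As both summands on the right are finite by hypothesis, the left-hand side is finite, so $f+g$ belongs to the class. Having verified that the class contains $0$ and is stable under both operations, I would conclude that it is a subspace, hence a vector space in its own right.

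There is no genuine obstacle in this argument: the entire content has been front-loaded into Theorem \ref{desigualdades}, whose homogeneity and subadditivity clauses are precisely the two subspace-closure conditions. The only point deserving a word of care is purely formal, namely ensuring that the pointwise operations are the ones under which $BV([a,b],(\mathcal{F},\psi_{\text{stand}}))$ inherits its structure from the full function space, so that the three verified conditions indeed certify a linear subspace rather than merely a set closed under some auxiliary operations.
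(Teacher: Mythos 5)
Your proof is correct and follows essentially the same route as the paper: both arguments reduce closure under addition and scalar multiplication to the subadditivity and homogeneity clauses of Theorem \ref{desigualdades}. Your version is in fact slightly more careful, since you explicitly verify that the zero function lies in $BV([a,b],(\mathcal{F},\psi_{\text{stand}}))$ and frame the class as a linear subspace of the ambient space of all $\mathcal{F}$-valued functions, points the paper's proof leaves implicit.
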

\begin{proof}
Let $f,g\in BV([a,b],(\mathcal{F},\psi_{\text{stand}}))$ and $\alpha\in\mathbb{C}$. Then, there exist $r,s\ge 0$ such that
$$
V_a^b(f,\mathcal{F},t)_{\mathcal{J}}\le r\quad\text{and}\quad V_a^b(g,\mathcal{F},t)_{\mathcal{J}}\le s.
$$
Defining $r+s=\lambda\ge 0$ and $|\alpha | r=\gamma\ge 0$ and applying Theorem \ref{desigualdades}, we obtain
$$
V_a^b(f+g,\mathcal{F},t)_{\mathcal{J}}\le \lambda\quad\text{and}\quad V_a^b(\alpha f,\mathcal{F},t)_{\mathcal{J}}\le \gamma.
$$
Therefore, $BV([a,b],(\mathcal{F},\psi_{\text{stand}}))$ is closed under the sum of functions and the product by scalars, which confirms that it is a vector space.
\end{proof}

\begin{theorem}\label{dota norma}
Let $(\mathcal{F},\psi_{\text{stand}},\mathcal{J})$ be a standardized two-Krein space and $t\in \mathcal{F}$. The application \\
$\mathcal{N}_{\mathcal{BV}_{\mathcal{F}_{t}}}\colon BV([a,b],(\mathcal{F},\psi_{\text{stand}}))\times BV([a,b],(\mathcal{F},\psi_{\text{stand}}))\to [0,\infty )$ defined by
 $$
\mathcal{N}_{\mathcal{BV}_{\mathcal{F}_{t}}}( f ,g)=\mathcal{N}_{\mathcal{J}}(f(a),t)V_{a}^{b}(g,\mathcal{F},t)_{\mathcal{J}}+\mathcal{N}_{\mathcal{J}}(g(a),t)V_{a}^{b}(f,\mathcal{F},t)_{\mathcal{J}},\quad f,g\in BV([a,b],(\mathcal{F},\psi_{\text{stand}})),
 $$
is a two-norm for $BV([a,b],(\mathcal{F},\psi_{\text{stand}}))$.
\end{theorem}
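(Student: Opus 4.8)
The plan is to verify directly the three defining axioms $(2N_1)$--$(2N_3)$ of a two-norm, reducing each to the already-established properties of the $\mathcal{J}$-two-norm $\mathcal{N}_{\mathcal{J}}$ (Proposition \ref{j2norm}) and of the $t$-variation functional $V_{a}^{b}(\cdot,\mathcal{F},t)_{\mathcal{J}}$ (Theorem \ref{desigualdades}). First I would observe that $\mathcal{N}_{\mathcal{BV}_{\mathcal{F}_{t}}}$ is well defined and finite: since $f,g\in BV([a,b],(\mathcal{F},\psi_{\text{stand}}))$ their $t$-variations are finite by hypothesis, while $\mathcal{N}_{\mathcal{J}}(f(a),t)$ and $\mathcal{N}_{\mathcal{J}}(g(a),t)$ are finite values of a two-norm on $\mathcal{F}$; hence the mapping indeed takes values in $[0,\infty)$.

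Symmetry $(2N_3)$ is immediate, since the defining expression is invariant under interchanging $f$ and $g$: swapping the two functions merely permutes the two summands, so $\mathcal{N}_{\mathcal{BV}_{\mathcal{F}_{t}}}(g,f)=\mathcal{N}_{\mathcal{BV}_{\mathcal{F}_{t}}}(f,g)$. For homogeneity $(2N_1)$, I would use that $(\alpha g)(a)=\alpha g(a)$ together with the homogeneity of $\mathcal{N}_{\mathcal{J}}$ and the homogeneity of variation (Theorem \ref{desigualdades}, item 1), giving $\mathcal{N}_{\mathcal{J}}((\alpha g)(a),t)=|\alpha|\,\mathcal{N}_{\mathcal{J}}(g(a),t)$ and $V_{a}^{b}(\alpha g,\mathcal{F},t)_{\mathcal{J}}=|\alpha|\,V_{a}^{b}(g,\mathcal{F},t)_{\mathcal{J}}$; factoring $|\alpha|$ out of both summands yields $\mathcal{N}_{\mathcal{BV}_{\mathcal{F}_{t}}}(f,\alpha g)=|\alpha|\,\mathcal{N}_{\mathcal{BV}_{\mathcal{F}_{t}}}(f,g)$.

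The substantive step is the triangle inequality $(2N_2)$. Here I would expand $\mathcal{N}_{\mathcal{BV}_{\mathcal{F}_{t}}}(f,g+h)$, apply the triangle inequality of $\mathcal{N}_{\mathcal{J}}$ to the factor $\mathcal{N}_{\mathcal{J}}((g+h)(a),t)=\mathcal{N}_{\mathcal{J}}(g(a)+h(a),t)$, and apply the subadditivity of variation (Theorem \ref{desigualdades}, item 2) to $V_{a}^{b}(g+h,\mathcal{F},t)_{\mathcal{J}}$. Since $\mathcal{N}_{\mathcal{J}}(f(a),t)$ and $V_{a}^{b}(f,\mathcal{F},t)_{\mathcal{J}}$ are non-negative, these two estimates combine into a sum of four non-negative terms, which then regroup as
\[
\mathcal{N}_{\mathcal{J}}(f(a),t)V_{a}^{b}(g,\mathcal{F},t)_{\mathcal{J}}+\mathcal{N}_{\mathcal{J}}(g(a),t)V_{a}^{b}(f,\mathcal{F},t)_{\mathcal{J}}
\]
and
\[
\mathcal{N}_{\mathcal{J}}(f(a),t)V_{a}^{b}(h,\mathcal{F},t)_{\mathcal{J}}+\mathcal{N}_{\mathcal{J}}(h(a),t)V_{a}^{b}(f,\mathcal{F},t)_{\mathcal{J}},
\]
which are exactly $\mathcal{N}_{\mathcal{BV}_{\mathcal{F}_{t}}}(f,g)$ and $\mathcal{N}_{\mathcal{BV}_{\mathcal{F}_{t}}}(f,h)$, establishing $\mathcal{N}_{\mathcal{BV}_{\mathcal{F}_{t}}}(f,g+h)\le \mathcal{N}_{\mathcal{BV}_{\mathcal{F}_{t}}}(f,g)+\mathcal{N}_{\mathcal{BV}_{\mathcal{F}_{t}}}(f,h)$.

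The only point requiring care is the bookkeeping of this final regrouping; there is no genuine analytic obstacle, because the notion of two-norm adopted here (conditions $(2N_1)$--$(2N_3)$) imposes no positive-definiteness requirement, so no separate argument characterizing the vanishing of $\mathcal{N}_{\mathcal{BV}_{\mathcal{F}_{t}}}$ is needed.
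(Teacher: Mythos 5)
Your proposal is correct and follows essentially the same route as the paper: both verify $(2N_1)$--$(2N_3)$ directly, using the homogeneity and subadditivity of the $t$-variation (Theorem \ref{desigualdades}) together with the two-norm properties of $\mathcal{N}_{\mathcal{J}}$, and both rely on the fact that the adopted definition of two-norm imposes no non-degeneracy condition. The only cosmetic difference is that the paper places the scalar and the sum in the first argument (e.g.\ bounding $\mathcal{N}_{\mathcal{BV}_{\mathcal{F}_{t}}}(f+g,h)$) while you place them in the second, which is equivalent by the symmetry $(2N_3)$.
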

\begin{proof}
Let $f,g\in BV([a,b],(\mathcal{F},\psi_{\text{stand}}))$ and $\alpha\in \mathbb{C}$.

$(2N_{1})$
\begin{align*}
 \mathcal{N}_{\mathcal{BV}_{\mathcal{F}_{t}}}( \alpha f ,g)&=\mathcal{N}_{\mathcal{J}}(\alpha f(a),t)V_{a}^{b}(g,\mathcal{F},t)_{\mathcal{J}}+\mathcal{N}_{\mathcal{J}}(g(a),t)V_{a}^{b}(\alpha f,\mathcal{F},t)_{\mathcal{J}}\\
 &=\left|\alpha\right| \mathcal{N}_{\mathcal{J}}(f(a),t)V_{a}^{b}(g,\mathcal{F},t)_{\mathcal{J}}+\left|\alpha\right|\mathcal{N}_{\mathcal{J}}(g(a),t)V_{a}^{b}(f,\mathcal{F},t)_{\mathcal{J}}\\
 &= \left| \alpha\right|\mathcal{N}_{\mathcal{BV}_{\mathcal{F}_{t}}}( f ,g).
\end{align*}

$(2N_{2})$
\begin{align*}
\mathcal{N}_{\mathcal{BV}_{\mathcal{F}_{t}}}(f+g,h)&=\mathcal{N}_{\mathcal{J}}((f+g)(a),t)V_{a}^{b}(h,\mathcal{F},t)_{\mathcal{J}}+\mathcal{N}_{\mathcal{J}}(h(a),t)V_{a}^{b}(f+g,\mathcal{F},t)_{\mathcal{J}}\\
&\leq \mathcal{N}_{\mathcal{J}}(f(a),t)V_{a}^{b}(h,\mathcal{F},t)_{\mathcal{J}}+\mathcal{N}_{\mathcal{J}}(h(a),t)V_{a}^{b}(f,\mathcal{F},t)_{\mathcal{J}}\\
&\ \ \ +\mathcal{N}_{\mathcal{J}}(g(a),t)V_{a}^{b}(h,\mathcal{F},t)_{\mathcal{J}}+\mathcal{N}_{\mathcal{J}}(h(a),t)V_{a}^{b}(g,\mathcal{F},t)_{\mathcal{J}}\\
&=\mathcal{N}_{\mathcal{BV}_{\mathcal{F}_{t}}}(f,h)+\mathcal{N}_{\mathcal{BV}_{\mathcal{F}_{t}}}(g,h)
\end{align*}

$(2N_{3})$
\begin{align*}
\mathcal{N}_{\mathcal{BV}_{\mathcal{F}_{t}}}( f ,g)&=\mathcal{N}_{\mathcal{J}}(f(a),t)V_{a}^{b}(g,\mathcal{F},t)_{\mathcal{J}}+\mathcal{N}_{\mathcal{J}}(g(a),t)V_{a}^{b}(f,\mathcal{F},t)_{\mathcal{J}}\\
&=\mathcal{N}_{\mathcal{J}}(g(a),t)V_{a}^{b}(f,\mathcal{F},t)_{\mathcal{J}}+\mathcal{N}_{\mathcal{J}}(f(a),t)V_{a}^{b}(g,\mathcal{F},t)_{\mathcal{J}}=\mathcal{N}_{\mathcal{BV}_{\mathcal{F}_{t}}}(g,f).
\end{align*}

\end{proof}

\section{Conclusion}
Every classical Krein space generates a standardized two-Krein space (Proposition \ref{k2k}). In a classical Krein space, the set of positive and negative vectors preserves orthogonality and completeness in the standardized two-Krein space (Propositions \ref{presesrvaort}, \ref{completez}). The two-norms associated with different fundamental decompositions in standardized two-Krein spaces are equivalent (Theorem \ref{2equiv}). The $(2,h)$-variation in two-Hilbert spaces is a particular case of the $t$-variation in standardized two-Krein spaces (Remark \ref{2h2k}). The vanishing of the $t$-variation implies that the two-norm of the function's images is constant with respect to $t$ (Proposition \ref{f cero constante}). The notion of strongly bounded $t$-variation functions in a standardized two-Krein space is independent of the fundamental decompositions (Theorem \ref{varequiv}). Any set of strongly bounded $t$-variation functions in a standardized two-Krein space can be endowed with a two-norm (Theorem \ref{dota norma}).

Given the notable interest generated by classical Krein spaces, it is natural to expect that some previous research, such as that developed in \cite{FERRER-ARROYO-NARANJO, Ferrer-Dominguez-Arroyo}, could be extended to spaces with an indefinite two-metric, which were introduced in this investigation.

%\vskip 6mm
%\noindent{\bf Acknowledgments}

%\noindent   The author is grateful to the reviewers for useful suggestions which improved the contents of this paper.
%The second author was supported by the  National Natural Science Foundation  under Grant No.
%1244145e2.

\end{document}